\theoremstyle{plain}
\newtheorem{theorem}{Theorem}[section]
\newtheorem{thm}[theorem]{Theorem}
\newtheorem{lemma}[theorem]{Lemma}
\newtheorem{cor}[theorem]{Corollary}
\newtheorem{prop}[theorem]{Proposition}
\newtheorem{example}[theorem]{Example}
\newtheorem{remark}[theorem]{Remark}
\theoremstyle{definition}
\newtheorem{defn}[theorem]{Definition}
\newtheorem{rmk}[theorem]{Remark}
\newtheorem{exam}[theorem]{Example}
\begin{document}
\title{Equivariant one-parameter formal deformations of Hom-Leibniz algebras}
\author{Goutam Mukherjee}
\email{gmukherjee.isi@gmail.com}
\address{Stat-Math Unit,
 Indian Statistical Institute, Kolkata 700108,
West Bengal, India.}

\author{Ripan Saha}
\email{ripanjumaths@gmail.com}
\address{Department of Mathematics,
Raiganj University, Raiganj, 733134,
West Bengal, India.}

\subjclass[2010]{16E40, 17A30, 55N91.}
\keywords{Group action, Hom-Leibniz algebra, equivariant cohomology, formal deformation, rigidity.}
\begin{abstract}
Aim of this paper is to  define a new type of cohomology for multiplicative Hom-Leibniz algebras which controls deformations of Hom-Leibniz algebra structure. The cohomology and the associated deformation theory for  Hom-Leibniz algebras as developed here are also extended to equvariant context, under the presence of finite group actions on Hom-Leibniz algebras.
\end{abstract}
\maketitle
\section{Introduction}

Gerstenhaber in a series of papers \cite{G1, G2, G3, G4, G5} introduced the notion of algebraic deformation theory for associative algebras. Later following Gerstenhaber, deformation theory of other algebraic structures are studied extensively in various context ( \cite{NR}, \cite{fox}, \cite{GS}, \cite{MM}, \cite{NR66}). For example, A. Nijenhuis and R. Richardson studied formal one-parameter deformation theory of Lie algebras \cite{NR}.

To study deformation theory of a type of algebra one needs a suitable cohomology, called deformation cohomology which controls deformations in question. In the case of associative algebras, deformation cohomology is Hochschild cohomology and for Lie algebras, the associated deformation cohomology is Chevalley-Eilenberg cohomology.

Hartwig, Larsson, and Silvestrov introduced the notion of Hom-Lie algebras in \cite{HLS}. Hom-Lie algebras appeared as examples of $q$-deformations of the Witt and Virasoro algebras. In \cite{MZ}, Makhlouf and Zusmanovich described Hom-Lie algebra structures on affine Kac-Moody algebras.  The notion of a Hom-Lie algebra structure is a generalization of Lie algebra structure on a vector space. A Lie algebra equipped with self linear map, called the structure map,  is said to be a  Hom-Lie algebra if the Jacobi identity is replaced by Hom-Jacobi identity, which is Jacobi identity twisted by the structure map. Obviously, a Hom-Lie algebra is a Lie algebra when the associated structure map is identity.

In \cite{loday93}, J.-L. Loday introduced a version of non anti-symmetric Lie algebra and its (co)homology, known as Leibniz algebra. The bracket of a Leibniz algebra satisfies Leibniz identity instead of Jacobi identity. In the presence of skew-symmetry Leibniz identity reduces to Jacobi identity. Cohomology of any Leibniz algebra with coefficients in a bimodule was introduced in  \cite{LP}.

Makhlouf and Silvestrov introduced the notion of a Hom-Leibniz algebra in \cite{MS08} generalizing Hom-Lie algebras. Thus, Hom-Leibniz algebras are generalizations of both Leibniz and Hom-Lie algebras. In Hom-Leibniz algebras, Leibniz identity is twisted by a self linear map and it is called Hom-Leibniz identity. Other variants of Hom-type algebras have been studied in \cite{AEM}, \cite{M10}, \cite{MS08}, \cite{MS10}, \cite{MS102}, \cite{saha}.

In \cite{MS10}, Makhlouf and Silvestrov introduced deformation cohomologies of first and second order to study one-parameter formal deformation theory for Hom-associative and Hom-Lie algebras. Hurle and Makhlouf introduced a new type of cohomology theory considering the structure map for Hom-associative and Hom-Lie algebras in \cite{HM19}, \cite{HM19glas}. Cheng and Cai defined cohomology groups of all orders for Hom-Leibniz algebras in \cite{CA}. 

In the present paper, we define a cohomology theory for multiplicative Hom-Leibniz algebras generalizing \cite{CA}. We call this new cohomology as $\alpha$-type cohomology for Hom-Leibniz algebras. We also develop one-parameter formal deformation theory for Hom-Leibniz algebras using $\alpha$-type cohomology as the deformation cohomology. 

Finally, we define a notion of finite group action on Hom-Leibniz algebras along the line of Bredon cohomology of a G-space, \cite{bredon67} and define equivariant version of $\alpha$-type cohomology. It turns out that for a Hom-Leibniz algebra equipped with an action of a finite group, its equivariant deformations are controlled by this $\alpha$-type cohomology. Note that an action of a finite group $G$ on a Hom-Leibniz algebra $L$ over a field $\mathbb K$ naturally extends to the  formal power series $L[[t]]$ by bilinearly extending the multiplication of $L$ making  $L[[t]]$ a Hom- Leibniz algebra over $\mathbb K [[t]].$ 

The paper is organized as follows. In Section \ref{sec 1}, we recall basics of Hom-Leibniz algebras which we shall use throughout the paper.
In Section \ref{sec 2}, we show that there is a Gerstenhaber bracket on shifted cochains for Hom-Leibniz cohomology introduced in \cite{CA}, and the bracket induces a graded Lie algebra structure on the graded cohomology. In Section \ref{sec 3}, we introduce $\alpha$-type cohomology of multiplicative Hom-Leibniz algebras. In Section \ref{sec 4}, we introduce one-parameter formal deformation theory of Hom-Leibniz algebras. We define infinitesimal deformation, study the the problem of extending a given deformation of order $n$ to a deformation of order $(n+1)$ and define the associated obstruction. We also study rigidity conditions for formal deformations. In Section \ref{sec 5}, we define the notion of finite group actions on Hom-Leibniz algebras and introduce equivariant  $\alpha$-type cohomology for Hom-Leibniz algebras equipped with a finite group action. In the final Section \ref{sec 6}, we define equivariant formal deformations and prove that equivariant  $\alpha$-type cohomology is the right notion of deformation cohomology in the present context. We end with a brief discussion of rigidity of equivariant deformations for Hom-Leibniz algebras equipped with finite group action.

\section{Preliminaries}\label{sec 1}
In this section, we recall the basics of Hom-Leibniz algebras. Let $\mathbb{K}$ be a field of characteristic zero. Though most of the constructions should also work in other characteristics (not $2$) or if $\mathbb{K}$ is a ring containing the rational numbers.
\begin{defn}
A Hom-Leibniz algebra is a $\mathbb{K}$-linear vector space $L$ together with a $\mathbb{K}$-bilinear map $[.,.]:L\times L\to L$ and a $\mathbb{K}$-linear map (structure map) $\alpha:L\to L$ satisfying Hom-Leibniz identity:
\begin{center}
$[\alpha(x),[y,z]]=[[x,y],\alpha(z)]-[[x,z],\alpha(y)].$
\end{center}
\end{defn}
A Hom-Leibniz algebra $(L_1,[.,.],\alpha)$ is called multiplicative if $\mathbb{K}$-linear map $\alpha$ satisfies $\alpha ([x,y])=[\alpha(x),\alpha(y)]$.

A morphism between Hom-Leibniz algebras $(L_1,[.,.]_1,\alpha_1)$ and $(L_2,[.,.]_2,\alpha_2)$ is a $\mathbb{K}$-linear map $\phi: L_1\to L_2$ which satisfies $\phi([x,y]_1)=[\phi(x),\phi(y)]_2$ and $\phi\circ\alpha_1=\alpha_2\circ\phi$.
\begin{example}
Any Hom-Lie algebra is automatically a Hom-Leibniz algebra as in the presence of skew-symmetry Hom-Leibniz identity is same as Hom-Jacobi identity.
\end{example}
\begin{example}
Given a Leibniz algebra $(L,[.,.])$ and a Leibniz algebra morphism $\alpha: L\to L$, one always get a Hom-Leibniz algebra $(L,[.,.]_\alpha,\alpha)$, where $[x, y]_\alpha= [\alpha(x), \alpha(y)]$.
\end{example}
\begin{example}
Let $L$ is a two-dimensional $\mathbb{C}$-vector space with basis $\lbrace e_1, e_2\rbrace$. We define a bracket operation as $[e_2,e_2]=e_1$ and zero else where and the endomorphism is given by the matrix \[
\alpha=
  \begin{bmatrix}
    1 & 1  \\
    0 & 1 
  \end{bmatrix}.
\]
It is a routine work to check that $(L, [.,.],\alpha)$ is a Hom-Leibniz algebra which is not Hom-Lie.
\end{example}
\begin{defn}
A Hom-vector space is a $\mathbb{K}$-vector space $M$ together with a $\mathbb{K}$-linear map $\beta:M\to M$ such that vector space operations are compatible with $\beta$. We write a Hom-vector space as $(M,\beta)$.
\end{defn}
 \begin{defn}
 Let $(L,[.,.],\alpha)$ be a Hom-Leibniz algebra. A  $L$-bimodule is a Hom-vector space  $(M,\beta)$ together with two $L$-actions (left and right multiplications), $m_l:L\otimes M\to M $ and $m_r:M\otimes L\to M $ satisfying the following conditions,
 \begin{align*}
 &\beta (m_l (x, m)) = m_l(\alpha(x), \beta(m)),\\
 &\beta (m_r (m, x)) = m_r(\beta(m), \alpha(x)),\\
 &m_r (\beta(m), [x,y]) = m_r(m_r(m, x), \alpha(y)) - m_r (m_r(m, y), \alpha(x)),\\
 & m_l (\alpha(x), m_r(m, y))=  m_r(m_l(x, m), \alpha (y))- m_l([x, y], \beta(m)),\\
 & m_l(\alpha(x), m_l(y, m)) = m_l([x, y], \beta(m)) - m_r(m_l(x, m), y),
 \end{align*}
 for any $x, y \in L$ and $m\in M$.
 \end{defn}
Note that any Hom-Leibniz algebra $(L,[.,.],\alpha)$ can be considered as a bimodule over itself by taking $m_l=m_r=[.,.]$ and $\beta=\alpha$.

We recall the cohomology of Hom-Leibniz algebra $(L,[.,.],\alpha)$ defined in \cite{CA}.
Let 
$${CL}_\alpha^n(L, L) = \lbrace \phi : L^{\otimes n} \to L \mid \alpha \circ \phi = \phi \circ \alpha^{\otimes n}\rbrace.$$
For $n\geq 1$,  $\delta^n:CL^n_{\alpha}(L,L)\to CL^{n+1}_{\alpha}(L,L)$ is defined as follows:
\begin{align}
	&(\delta^n \phi)(x_1,\dots,x_{n+1}) \\ \nonumber
	& =[\alpha^{n-1}(x_1),\phi(x_2,\ldots,x_{n+1})]\\\nonumber
 &+\sum^{n+1}_{i=2}(-1)^{i}[\phi(x_1,\ldots,\hat{x_i},\ldots,x_{n+1}),\alpha^{n-1}(x_i)]                                    \\
	                                 & +\sum_{1\leq i<j\leq n+1}(-1)^{j+1}\phi(\alpha(x_1),\ldots,\alpha(x_{i-1}),[x_i,x_j],\alpha(x_{i+1}),\ldots,\widehat{\alpha(x_j)},\ldots,\alpha(x_{n+1})).   \nonumber                
\end{align}	                                 
For $n\geq 1$, $\delta^2=0$ and $({CL}_\alpha^\ast(L, L), \delta)$ is a cochain complex. The cohomology of this cochain complex is discussed in \cite{CA}.

\section{Gerstenhaber bracket on cochains for Hom-Leibniz Cohomology}\label{sec 2}
In \cite{AEM}, authors studied Gerstenhaber algebra structure on the shifted cochains for Hom-associative algebras. In this section, we define a Gerstenhaber bracket on shifted cochains for Hom-Leibniz algebra cohomology introduced by Cheng and Cai,\cite{CA} and show that this bracket induces a graded Lie algebra structure on cohomology of Hom-Leibniz algebra.

\begin{defn}
Let $S_n$ be the permutation group of $n$ elements $1, \ldots, n.$ A permutation $\sigma \in S_n$ is called a $(p, q)$-shuffle if $p+q = n$ and 
$$\sigma (1) < \cdots <\sigma (p) ~~\mbox{and}~~~\sigma (p+1) < \cdots <\sigma (p+q).$$ 
\end{defn}
In the group algebra $\mathbb K [S_n],$ let $Sh_{p,q}$ be the element $$Sh_{p,q}: = \sum_{\sigma} \sigma,$$ where the summation is over all $(p, q)\mbox{-shuffles}.$

Suppose for $n\geq-1$,\,$CH^n(L,L)$ is the space of all $(n+1)$-linear maps $\phi: L^{\otimes n+1}\to L$ satisfying
$$\alpha\circ \phi=\phi\circ \alpha^{\otimes n+1}.$$
Let $\phi\in CH^p(L,L)$ and $\psi\in CH^q(L,L)$, where $p\geq 0,q\geq 0$, we define $\psi\circ\phi\in CH^{p+q}(L,L)$ as follows,
\begin{align*}
\psi\circ\phi(x_1,\ldots,x_{p+q+1})=&\sum^{q+1}_{k=1}(-1)^{p(k-1)}\lbrace\sum_{\sigma\in Sh(p,q-k+1)}sgn(\sigma)\psi(\alpha^p(x_1),\ldots,\alpha^p(x_{k-1}),\\&\phi(x_k,x_{\sigma(k+1)},\ldots,x_{\sigma(k+p)}),\alpha^p(x_{\sigma(k+p+1)}),\ldots,\alpha^p(x_{\sigma(p+q+1)})\rbrace.
\end{align*}
Suppose
$$CH^*(L,L)=\bigoplus_{p\geq -1} CH^p(L,L).$$
We define a bracket $[.,.]$ on $CH^*(L,L)$ as $[\psi,\phi]=\psi\circ\phi+(-1)^{pq+1}\phi\circ\psi$. 
\begin{remark}
For $p=q=1$, 
\begin{align*}
& \psi\circ\phi(x_1,x_2,x_{3})\\
& =\psi(\alpha(x_1),\phi(x_2,x_3))-\psi(\phi(x_1,x_2),\alpha(x_3))+\psi(\phi(x_1,x_3),\alpha(x_2))=\psi\circ_{\alpha}\phi.
\end{align*}
For $\phi=\psi$, this is nothing but $\alpha$-associator of $\phi$.
\end{remark}
\begin{prop}
Suppose $m_1\in CH^1(L,L)$. Then $(L,m_1,\alpha)$ is a Hom-Leibniz algebra if and only if $[m_1,m_1]=0.$
\end{prop}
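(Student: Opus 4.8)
The plan is to compute the bracket $[m_1, m_1]$ explicitly and match it against the Hom-Leibniz identity. Since $m_1 \in CH^1(L,L)$ corresponds to a bilinear map $m_1(x,y) = [x,y]$, and for $p = q = 1$ the bracket specializes via the remark to $[m_1, m_1] = m_1 \circ m_1 + (-1)^{1\cdot 1 + 1} m_1 \circ m_1 = 2\, m_1 \circ m_1$, I expect the whole statement to reduce to showing that $m_1 \circ m_1 = 0$ if and only if the Hom-Leibniz identity holds (using that $\mathbb{K}$ has characteristic zero, so the factor $2$ is invertible). Thus the first step is to record that $[m_1,m_1] = 2(m_1 \circ_\alpha m_1)$, reducing everything to the evaluation of $m_1 \circ_\alpha m_1$ on a triple $(x,y,z)$.

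Next I would evaluate $m_1 \circ m_1 (x,y,z)$ using the explicit circle-product formula from the remark:
\begin{align*}
(m_1 \circ m_1)(x,y,z) = [\alpha(x),[y,z]] - [[x,y],\alpha(z)] + [[x,z],\alpha(y)].
\end{align*}
This is exactly the difference between the two sides of the Hom-Leibniz identity $[\alpha(x),[y,z]] = [[x,y],\alpha(z)] - [[x,z],\alpha(y)]$. So $(m_1 \circ m_1)(x,y,z) = 0$ for all $x,y,z$ precisely when $m_1$ satisfies the Hom-Leibniz identity, i.e.\ when $(L, m_1, \alpha)$ is a Hom-Leibniz algebra.

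Combining the two steps: $[m_1,m_1] = 0 \iff m_1 \circ m_1 = 0 \iff$ Hom-Leibniz identity holds. I would also note in passing that the membership $m_1 \in CH^1(L,L)$ already encodes the compatibility $\alpha \circ m_1 = m_1 \circ \alpha^{\otimes 2}$, which is the multiplicativity condition $\alpha([x,y]) = [\alpha(x),\alpha(y)]$, so no separate verification of that axiom is needed.

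The main obstacle, and really the only nontrivial point, is the bookkeeping in the circle-product formula: one must carefully unwind the sum over $k$ from $1$ to $q+1 = 2$, track the signs $(-1)^{p(k-1)}$, and correctly enumerate the $(1, q-k+1)$-shuffles with their signatures to confirm that the three terms and their signs come out exactly as in the Hom-Leibniz identity. Since $p = q = 1$ this is a short finite computation, and the remark has essentially already carried it out, so I would cite the remark rather than redo the shuffle combinatorics. The proof is therefore a direct identification rather than an argument requiring any clever idea.
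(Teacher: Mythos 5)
Your proposal is correct and follows essentially the same route as the paper: both compute $[m_1,m_1](x,y,z)=2\bigl(m_1(\alpha(x),m_1(y,z))-m_1(m_1(x,y),\alpha(z))+m_1(m_1(x,z),\alpha(y))\bigr)$ via the $p=q=1$ specialization of the circle product and identify the vanishing of this expression with the Hom-Leibniz identity. Your explicit remark that the factor $2$ is invertible in characteristic zero is a small point the paper leaves implicit, but the argument is the same.
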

\begin{proof}
\begin{align*}
[m_1,m_1] (x_1, x_2, x_3)& =2\big(m_1(\alpha(x_1),m_1(x_2,x_3)) -m_1(m_1(x_1,x_2), \alpha(x_3))\\
&+m_1(m_1(x_1,x_3),\alpha(x_2))\big).
\end{align*}
Thus, $(L,m_1,\alpha)$ is a Hom-Leibniz algebra if and only if $[m_1,m_1]=0.$
\end{proof}
\begin{lemma}
Let $(L,m_0,\alpha)$ be a Hom-Leibniz algebra and $\phi\in CH^p(L,L)$, then $\delta\phi=-[\phi,m_0]$, where $m_0=[.,.]$ is the Leibniz bracket of $L$.
\end{lemma}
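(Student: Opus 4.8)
The plan is to expand the bracket $[\phi,m_0]=\phi\circ m_0+(-1)^{p\cdot 1+1}m_0\circ\phi=\phi\circ m_0+(-1)^{p+1}m_0\circ\phi$ directly from the definition of the circle product, and then match the resulting expressions term by term against the three summands of the Cheng--Cai coboundary. The bookkeeping is easy to set up once one fixes degrees: $\phi\in CH^p(L,L)$ corresponds to an element of $CL^{p+1}_\alpha(L,L)$, so $\delta\phi=\delta^{p+1}\phi$, and both $\delta\phi$ and $[\phi,m_0]\in CH^{p+1}(L,L)$ are $(p+2)$-linear. Throughout, $m_0=[.,.]\in CH^1(L,L)$ has degree $1$.

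First I would compute $\phi\circ m_0$, in which $\phi$ is the \emph{outer} map (degree $p$) and $m_0$ is the \emph{inner} map (degree $1$); consequently the spectator arguments carry $\alpha^1=\alpha$ and the outer sum runs over $k=1,\dots,p+1$. For each fixed $k$ the relevant shuffles are the $(1,p-k+1)$-shuffles of $\{k+1,\dots,p+2\}$, and such a shuffle is completely determined by the single index $j>k$ that is pulled into the second slot of the bracket $[x_k,x_j]$; its sign is $(-1)^{\,j-k-1}$. Multiplying by the prefactor $(-1)^{k-1}$ collapses the sign to $(-1)^{j}$, so that summing over $1\le k<j\le p+2$ produces exactly the negative of the third summand of $\delta\phi$ (the one inserting a bracket inside $\phi$), which carries the sign $(-1)^{j+1}$. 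Thus $\phi\circ m_0=-T_3$, where $T_3$ denotes that third summand.

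Next I would compute $m_0\circ\phi$, where now $m_0$ is outer (degree $1$) and $\phi$ inner (degree $p$); here the spectators carry $\alpha^p$ and the outer sum has only the two terms $k=1,2$. The term $k=2$ uses the trivial $(p,0)$-shuffle and, after substituting $m_0=[.,.]$, yields $(-1)^p[\alpha^p(x_1),\phi(x_2,\dots,x_{p+2})]$, namely $(-1)^p$ times the first summand $T_1$ of $\delta\phi$ (note $\alpha^{n-1}=\alpha^p$ for $n=p+1$). The term $k=1$ ranges over the $(p,1)$-shuffles of $\{2,\dots,p+2\}$, each determined by the index $j$ sent to the last slot with sign $(-1)^{\,p+2-j}=(-1)^{p+j}$; this reproduces $(-1)^p$ times the second summand $T_2$ (the bracket of $\phi$ against $\alpha^p(x_j)$). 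Hence $m_0\circ\phi=(-1)^p(T_1+T_2)$.

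Finally I would assemble the two pieces: since $(-1)^{p+1}(-1)^p=-1$, the contribution $(-1)^{p+1}m_0\circ\phi$ equals $-(T_1+T_2)$, while $\phi\circ m_0=-T_3$, so $[\phi,m_0]=-(T_1+T_2+T_3)=-\delta\phi$, as claimed. I expect the only genuine obstacle to be the sign bookkeeping: one must read off the shuffle signs correctly and verify that the shuffle sign, the prefactor $(-1)^{p(k-1)}$, and the Hom-twists ($\alpha^p$ in $m_0\circ\phi$ versus $\alpha$ in $\phi\circ m_0$) align precisely with the coboundary signs $(-1)^{j+1}$ and the powers $\alpha^{n-1}$ in $\delta^n$. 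Once the bijection between a shuffle and the distinguished deleted index $j$ is pinned down in each of the two circle products, the remaining matching is entirely mechanical.
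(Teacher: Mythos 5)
Your proposal is correct and follows essentially the same route as the paper: expand $[\phi,m_0]=\phi\circ m_0+(-1)^{p+1}m_0\circ\phi$ from the definition of the circle product, identify each $(1,p-k+1)$- or $(p,1)$-shuffle with the distinguished index $j$, and match the resulting signed terms against the three summands of the Cheng--Cai coboundary. Your sign bookkeeping ($(-1)^{j}$ for $\phi\circ m_0$ versus $(-1)^{j+1}$ in $\delta\phi$, and $(-1)^{p+2-j}$ together with the prefactor $(-1)^{p}$ for $m_0\circ\phi$) agrees with the paper's computation, so the argument goes through as planned.
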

\begin{proof}
Let $x_1,x_1,\ldots,x_{p+2}\in L$. From the coboundary formula, we have
\begin{align*}
 & \delta\phi(x_1,\ldots,x_{p+2})\\
 &=[\alpha^{p}(x_1),\phi(x_2,\ldots,x_{p+2})]\\
 &+\sum^{p+2}_{i=2}(-1)^{i}[\phi(x_1,\ldots,\hat{x_i},\ldots,x_{p+2}),\alpha^{p}(x_i)]\\
&+\sum_{1\leq i<j\leq {p+2}}(-1)^{j+1}\phi(\alpha(x_1),\ldots,\alpha(x_{i-1}),[x_i,x_j],\alpha(x_{i+1}),\ldots,\widehat{\alpha(x_j)},\ldots,\alpha(x_{p+2})).
 \end{align*}
 Note that $m_0\in CH^1(L,L)$ and $[\phi,m_0]=\phi\circ m_0+(-1)^{p+1}m_0\circ \phi$. 
 \begin{align*}
\phi\circ m_0(x_1,\ldots,x_{p+2})=&\sum^{p+1}_{k=1}(-1)^{k-1}\lbrace\sum_{\sigma\in Sh(1,p-k+1)}sgn(\sigma)\phi(\alpha(x_1),\ldots,\alpha(x_{k-1}),\\& [x_k,x_{\sigma(k+1)}],\alpha(x_{\sigma(k+2)}),\ldots,\alpha(x_{\sigma(p+2)})\rbrace.\\
&=\sum_{1\leq k<j\leq (p+2)}(-1)^{j}\phi(\alpha(x_1),\ldots,\alpha(x_{k-1}),\\& [x_k,x_j],\alpha(x_{(k+1)}),\ldots,\alpha(x_{j-1}),\widehat{\alpha(x_j)},\alpha(x_{j+1}),\ldots,\alpha(x_{(p+2)})).
\end{align*}
 On the other hand, we have
 \begin{align*}
&m_0\circ\phi(x_1,\ldots,x_{p+2})\\
&=\sum_{\sigma\in Sh(p,1)}sgn(\sigma)m_0(\phi(x_1,x_{\sigma(1)},\ldots,x_{\sigma(p+1)}),\alpha^p(x_{\sigma(p+2)}))\\
&+(-1)^p[\alpha^p(x_1),\phi(x_2,\ldots,x_{p+2})]\\
&=\sum_{2\leq j\leq {p+2}}(-1)^{p+2-j}[\phi(x_1,x_2,\ldots,x_{j-1},\hat{x_j},x_{j+1},\ldots,x_{p+1}),\alpha^p(x_j)]\\
&+(-1)^p[\alpha^p(x_1),\phi(x_2,\ldots,x_{p+2})].
\end{align*}
Therefore,
\begin{align*}
[\phi,m_0] (x_1,\dots, x_{p+2}) & =(\phi\circ m_0+(-1)^{p+1}m_0\circ\phi)(x_1,\ldots,x_{p+2})\\
&=\sum_{1\leq k<j\leq p+2}(-1)^{j}\phi(\alpha(x_1), \ldots,\alpha(x_{k-1}), [x_k,x_j],\alpha(x_{k+1}),\ldots,\\
& \alpha(x_{j-1}),\widehat{\alpha(x_j)},
\alpha(x_{j+1}),\ldots,\alpha(x_{\sigma(p+2)})\\
&+\sum_{2\leq j\leq {p+2}}(-1)^{j+1}[\phi(x_1,x_2,\ldots,x_{j-1},\hat{x_j},x_{j+1},\ldots,x_{p+2}),\alpha^p(x_j)]\\
& -[\alpha^p(x_1),\phi(x_2,\ldots,x_{p+2})]
=-\delta\phi.
\end{align*}
Thus, we have $\delta\phi=-[\phi,m_0].$
\end{proof}
The graded $\mathbb{K}$-module $CH^*(L,L)=\bigoplus_{p\geq -1} CH^p(L,L)$ together with the bracket $[\psi,\phi]=\psi\circ\phi+(-1)^{pq+1}\phi\circ\psi$ is a graded Lie algebra \cite{bala}. If $\phi\in CH^p(L,L)$, then we define $|\phi|=p+1$.
We define a linear map $d:CH^*(L,L)\to CH^*(L,L)$ as follows:
$$d(\phi)=(-)^{|\phi|}\delta(\phi).$$
Using the graded Lie algebra structure on $CH^*(L,L)$, we prove the following lemma.
\begin{lemma}\label{differential}
The differential $d$ satisfies the the following graded derivation formula:
$$d[\psi,\phi]=[d\psi,\phi]+(-1)^{|\psi|}[\psi,d\phi],$$
for $\psi,\phi\in CH^*(L,L)$.
\end{lemma}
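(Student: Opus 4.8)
The plan is to deduce the derivation formula from a single structural fact: the differential $d$ is, up to sign, the adjoint action of the bracket element $m_0 = [.,.] \in CH^1(L,L)$ in the graded Lie algebra $CH^*(L,L)$. By the preceding lemma we have $\delta\phi = -[\phi,m_0]$, and by definition $d\phi = (-1)^{|\phi|}\delta\phi$; combining these expresses $d\phi$ purely in terms of the graded Lie bracket with $m_0$. Using graded antisymmetry of $[\cdot,\cdot]$ together with the fact that $m_0$ sits in odd degree, I would first rewrite $d\phi$ as a fixed sign times $[m_0,\phi]$, so that $d$ becomes (up to that sign) $\mathrm{ad}_{m_0} = [m_0,-]$. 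Once $d$ is identified with an adjoint operator, the derivation property is no longer a computation about coboundaries but an instance of the graded Jacobi identity.

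Concretely, I would first record that $[m_0,m_0]=0$: this is exactly the content of the Proposition above, since $(L,m_0,\alpha)$ is a Hom-Leibniz algebra. I would then invoke the graded Jacobi identity in its derivation form,
\[
[m_0,[\psi,\phi]] = [[m_0,\psi],\phi] + (-1)^{\deg m_0\,\deg\psi}[\psi,[m_0,\phi]],
\]
which is precisely the assertion that $\mathrm{ad}_{m_0}$ is a graded derivation of the bracket. Substituting back the expressions for $[m_0,\psi]$ and $[m_0,\phi]$ in terms of $d\psi$ and $d\phi$ then produces the two terms $[d\psi,\phi]$ and $[\psi,d\phi]$ on the right-hand side, and collecting the prefactors yields the stated identity. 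Note that $[m_0,m_0]=0$ is what simultaneously guarantees $d^2=0$, so the same element that makes $d$ a differential is the one whose adjoint action makes $d$ a derivation.

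The only genuine work is sign bookkeeping, and I expect this to be the main (indeed essentially the sole) obstacle. Two gradings are in play: the grading $\deg\phi = p$ on $CH^p(L,L)$, which is the one for which $[\cdot,\cdot]$ is graded-antisymmetric and satisfies graded Jacobi, and the shifted degree $|\phi| = p+1$ used in the definition $d = (-1)^{|\phi|}\delta$. I would carry both through carefully, using $|\phi| = \deg\phi + 1$ and $\deg m_0 = 1$; the crucial point is that $d$ raises the shifted degree by one, i.e. $d$ is an \emph{odd} operator with respect to $|\cdot|$, which is exactly why the correcting factor on the second term appears as $(-1)^{|\psi|}$ rather than as a sign referring to $\deg\psi$. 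Translating each sign coming from graded antisymmetry and from the Jacobi identity into this shifted convention is delicate but purely mechanical, and the structural content of the lemma is entirely captured by the graded Jacobi identity together with the identification $d = \pm\,\mathrm{ad}_{m_0}$ furnished by the previous lemma.
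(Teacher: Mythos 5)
Your proposal is exactly the paper's argument: the paper's proof writes $d[\psi,\phi]$ as a sign times $[m_0,[\psi,\phi]]$ using the preceding lemma $\delta\phi=-[\phi,m_0]$ together with graded antisymmetry, applies the graded Jacobi identity in derivation form, and then converts $[m_0,\psi]$ and $[m_0,\phi]$ back into $d\psi$ and $d\phi$ --- precisely the identification $d=\pm\,\mathrm{ad}_{m_0}$ you describe. The only caveat is that you defer the sign bookkeeping, which is indeed where all the content lies (and note that $[m_0,m_0]=0$ is not actually needed for the derivation formula itself, only for $d^2=0$); carrying the signs through in the shifted convention $|\phi|=p+1$ completes the proof along the same lines as the paper.
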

\begin{proof}
Let $\phi\in CH^p(L,L)$ and $\psi\in CH^q(L,L)$. To prove this Lemma, we use  properties of graded Lie algebra.
\begin{align*}
d[\psi,\phi]&=(-1)^{p+q+1}\delta[\psi,\phi]\\
                   &=-(-1)^{p+q+1}[[\psi,\phi],m_0]\\
                   &=[m_0,[\psi,\phi]]\\
                   &=-(-1)^{p(q+1)}[\phi,[m_0,\psi]]-(-1)^{p+q}[\psi,[\phi,m_0]]\\
                   &=(-1)^{q+1}[[m_0,\psi],\phi]+(-1)^q[\psi,d\phi]\\
                   &=[(-1)^q\delta\psi,\phi]+(-1)^{q+1}[\psi,d\phi]\\
                   &=[d\psi,\phi]+(-1)^{q+1}[\psi,d\phi]\\
                   &=[d\psi,\phi]+(-1)^{|\psi|}[\psi,d\phi].
\end{align*}
\end{proof}
 From the Lemma (\ref{differential}), $(CH^*(L,L),[.,.],d)$ is a differential graded Lie algebra. The cohomology group of $(CH^*(L,L),[.,.],d)$ is denoted by $(H^*_{HL}(L,L)$. It is clear from the Lemma (\ref{differential}) that the Gerstenhaber bracket on graded cochains induces a bracket $[.,.]$ on the cohomology level and we have the following theorem.
\begin{theorem}
$(H^*_{HL}(L,L),[.,.])$ is a graded Lie algebra.
\end{theorem}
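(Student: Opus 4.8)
The plan is to invoke the differential graded Lie algebra structure on $CH^*(L,L)$ furnished by Lemma~\ref{differential}, and then to establish that the bracket $[.,.]$ descends to a well-defined operation on the cohomology $H^*_{HL}(L,L)$. Once this descent is in place, the graded antisymmetry and the graded Jacobi identity, which already hold at the cochain level because $(CH^*(L,L),[.,.])$ is a graded Lie algebra, are automatically inherited by the induced bracket, since these are identities satisfied by arbitrary cochains and hence in particular by any chosen cocycle representatives. Thus the entire content of the theorem is the well-definedness of the induced bracket.

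First I would verify that the bracket of two cocycles is a cocycle. If $d\psi=0$ and $d\phi=0$, then the graded derivation formula of Lemma~\ref{differential} yields
\[
d[\psi,\phi]=[d\psi,\phi]+(-1)^{|\psi|}[\psi,d\phi]=0,
\]
so $[\psi,\phi]$ is again a cocycle, and we may represent the bracket of two cohomology classes by the bracket of chosen representatives.

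Next I would check independence of this representative, i.e.\ that the bracket of a coboundary with a cocycle is a coboundary. If $\phi$ is a cocycle and $\psi=d\eta$, then Lemma~\ref{differential} applied to $\eta$ and $\phi$ gives
\[
[d\eta,\phi]=d[\eta,\phi]-(-1)^{|\eta|}[\eta,d\phi]=d[\eta,\phi],
\]
using $d\phi=0$, so $[\psi,\phi]$ is a coboundary. The symmetric computation, with a cocycle $\psi$ and a coboundary $\phi=d\zeta$, shows $[\psi,d\zeta]=(-1)^{|\psi|}d[\psi,\zeta]$, again a coboundary. Hence the pairing $([\psi],[\phi])\mapsto[[\psi,\phi]]$ on $H^*_{HL}(L,L)$ is well defined.

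I do not anticipate any real obstacle: the whole argument reduces to the graded derivation formula already proved in Lemma~\ref{differential}, and the graded Lie algebra axioms descend formally once well-definedness is secured. The only point requiring mild care is the bookkeeping of the degree shift $|\phi|=p+1$ and the associated signs, so that the correct sign $(-1)^{|\psi|}$ is carried through each application of the derivation formula.
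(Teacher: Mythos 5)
Your argument is correct and follows exactly the route the paper intends: the paper simply asserts that the descent of the bracket to cohomology is "clear from Lemma \ref{differential}," and your proposal supplies the standard verification (bracket of cocycles is a cocycle, bracket of a coboundary with a cocycle is a coboundary, axioms inherited from the cochain-level graded Lie algebra). No gaps; you have merely made explicit what the paper leaves implicit.
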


\section{\texorpdfstring{$\alpha$}{alpha}-type Leibniz cohomology of Hom-Leibniz algebras}\label{sec 3}
For the deformation theory we need a new type of cohomology that can capture information of deformation for both multiplication and structure map of a Hom-Leibniz algebra.  We begin this section by introducing a cohomology theory for Hom-Leibniz algebras considering both the bracket and the structure map $\alpha$. We call this cohomology for Hom-Leibniz algebras by $\alpha$-type cohomology of Hom-Leibniz algebras. We will see that this cohomology is a generalization of the cohomology defined in the last section.

Let $\gamma(x, y) =[x, y]$ and we define the cochain complex for the cohomology of $(L, \gamma, \alpha)$ with coefficients in $L$ as follows:
\begin{align*}
\widetilde{CL}^n(L, L) & = \widetilde{CL}_\gamma^n(L, L) \oplus  \widetilde{CL}_\alpha^n(L, L)\\
                                      & = \text{Hom}_\mathbb{K}(L^{\otimes n}, L) \oplus \text{Hom}_\mathbb{K}(L^{\otimes {n-1}}, L),~~~\text{for all}~ n\geq 2.
\end{align*}
\begin{align*}
\widetilde{CL}^1 (L, L) & = \widetilde{CL}_\gamma^1 (L, L) \oplus  \widetilde{CL}_\alpha^1 (L, L) = \text{Hom}_\mathbb{K}(L, L) \oplus \lbrace 0 \rbrace.
\end{align*}
\begin{align*}
\widetilde{CL}^n(L, L)= \lbrace 0 \rbrace~~~\text{for all}~ n\leq 0.
\end{align*}

We may write elements of $\widetilde{CL}^n(L, L)$ as $(\phi, \psi)$ or $\phi + \psi$, where $\phi \in \widetilde{CL}_\gamma^n(L, L)$ and $\psi \in \widetilde{CL}_\alpha^n(L, L)$. Note that we set $\text{Hom}_\mathbb{K}(\mathbb{K}, L) = \lbrace 0 \rbrace$ instead of $\mathbb{K}$ as usual, otherwise $\alpha^{-1}$ would be needed in the definition of the differential.
We define four maps with domain and range given in the following diagram:
\[
\begin{tikzcd}
\widetilde{CL}_\gamma^n(L, L) \arrow[rdd, crossing over, "{\partial_{\gamma \alpha}}"  pos=.2] \arrow[r, "{\partial_{\gamma \gamma}}"] &  [10 ex] \widetilde{CL}_\gamma^{n+1}(L, L)\\ [-3 ex]
\bigoplus & \bigoplus\\ [-3 ex]
 \widetilde{CL}_\alpha^n(L, L)\arrow[uur,  "{\partial_{\alpha \gamma}}" pos=.2] \arrow[r, "{\partial_{\alpha \alpha}} "] & \widetilde{CL}_\alpha^{n+1}(L, L)
\end{tikzcd}
\]
\begin{align}
&(\partial_{\gamma \gamma} \phi)(x_1,\dots,x_{n+1}) \\ \nonumber
	& =[\alpha^{n-1}(x_1),\phi(x_2,\ldots,x_{n+1})]\\ \nonumber
 &+\sum^{n+1}_{i=2}(-1)^{i}[\phi(x_1,\ldots,\hat{x_i},\ldots,x_{n+1}),\alpha^{n-1}(x_i)]                                    \\
	                                 & +\sum_{1\leq i<j\leq n+1}(-1)^{j+1}\phi(\alpha(x_1),\ldots,\alpha(x_{i-1}),[x_i,x_j],\alpha(x_{i+1}),\ldots,\widehat{\alpha(x_j)},\ldots,\alpha(x_{n+1})).   \nonumber 
 \end{align}
 
\begin{align}
&(\partial_{\alpha \alpha} \psi)(x_1,\dots,x_{n})   \\ \nonumber
	& =[\alpha^{n-1}(x_1),\psi(x_2,\ldots,x_{n})]\\ \nonumber 
 &+\sum^{n}_{i=2}(-1)^{i}[\psi(x_1,\ldots,\hat{x_i},\ldots,x_{n}),\alpha^{n-1}(x_i)]                                    \\
	                                 & +\sum_{1\leq i<j\leq n}(-1)^{j+1}\psi(\alpha(x_1),\ldots,\alpha(x_{i-1}),[x_i,x_j],\alpha(x_{i+1}),\ldots,\widehat{\alpha(x_j)},\ldots,\alpha(x_{n})).   \nonumber                  
\end{align}
 \begin{align}
	(\partial_{\gamma \alpha} \phi) (x_1,\ldots, x_{n})    = \alpha( \phi(x_1,\ldots, x_{n}) ) - \phi(\alpha(x_1),\ldots,\alpha( x_{n})) 
\end{align}
\begin{align}
& (\partial_{\alpha \gamma} \psi)(x_1,\ldots, x_{n+1})\\ \nonumber
&= \sum^{n+1} _{i=2} (-1)^i [ [\alpha^{n-2}(x_1), \alpha^{n-2} (x_i)],\psi(x_2,\ldots, \hat{x_i}, \dots,x_{n+1})] \\ \nonumber
&+\sum_{2\leq  i<j \leq n+1}(-1)^{j}[\psi(x_1,\ldots, x_{i-1},\hat{x_i}, x_{i+1},\ldots,\hat{x_j},\ldots,x_{n+1}), [\alpha^{n-2}(x_i), \alpha^{n-2} (x_j)]]\nonumber
\end{align}

We set 
\begin{align}
\partial (\phi + \psi) & = (\partial_{\gamma \gamma} + \partial_{\gamma \alpha})(\phi) - (\partial_{\alpha \alpha} + \partial_{\alpha \gamma})(\psi)\\
\partial (\phi, \psi) & = (\partial_{\gamma \gamma} \phi - \partial_{\alpha \gamma}\psi, \partial_{\gamma \alpha}\phi - \partial_{\alpha \alpha}\psi). \nonumber
\end{align}

\begin{thm}
$\widetilde{CL}^\ast(L, L)$ together with the map $\partial (\phi , \psi) = (\partial_{\gamma \gamma} \phi - \partial_{\alpha \phi}\psi, \partial_{\gamma \alpha}\phi - \partial_{\alpha \alpha}\psi)$ is a cochain complex.
\end{thm}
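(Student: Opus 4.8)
The plan is to verify directly that $\partial\circ\partial=0$, exploiting the block structure of $\partial$. Recording $\partial$ as a matrix of maps acting on a column $(\phi,\psi)^{T}$,
\[
\partial=\begin{pmatrix}\partial_{\gamma\gamma} & -\partial_{\alpha\gamma}\\[2pt]\partial_{\gamma\alpha} & -\partial_{\alpha\alpha}\end{pmatrix},
\]
I would expand $\partial^{2}(\phi,\psi)$ and collect, in each of the two output slots, the coefficient operators of $\phi$ and of $\psi$. Since $\phi$ and $\psi$ range over the full summands $\mathrm{Hom}_{\mathbb K}(L^{\otimes n},L)$ and $\mathrm{Hom}_{\mathbb K}(L^{\otimes n-1},L)$ independently, the identity $\partial^{2}=0$ is equivalent to the four operator relations
\begin{align*}
&\partial_{\gamma\gamma}^{2}=\partial_{\alpha\gamma}\,\partial_{\gamma\alpha}, && \partial_{\gamma\gamma}\,\partial_{\alpha\gamma}=\partial_{\alpha\gamma}\,\partial_{\alpha\alpha},\\
&\partial_{\gamma\alpha}\,\partial_{\gamma\gamma}=\partial_{\alpha\alpha}\,\partial_{\gamma\alpha}, && \partial_{\alpha\alpha}^{2}=\partial_{\gamma\alpha}\,\partial_{\alpha\gamma}.
\end{align*}
Thus the whole theorem reduces to checking these four relations in each cochain degree.

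The two relations in the second column are intertwining conditions, and they hold for \emph{arbitrary} cochains by multiplicativity $\alpha([x,y])=[\alpha(x),\alpha(y)]$, with no use of any equivariance constraint. For the third relation I would write $\partial_{\gamma\alpha}(\partial_{\gamma\gamma}\phi)=\alpha\circ(\partial_{\gamma\gamma}\phi)-(\partial_{\gamma\gamma}\phi)\circ\alpha^{\otimes}$ and push $\alpha$ through every bracket and every $\alpha^{k}$-prefactor. The effect is exactly to raise the prefactor $\alpha^{n-1}$ attached to the arguments to $\alpha^{n}$, while every inner occurrence of $\phi$ is replaced by the defect $\alpha\circ\phi-\phi\circ\alpha^{\otimes}=\partial_{\gamma\alpha}\phi$; the resulting expression is \emph{literally} $\partial_{\alpha\alpha}(\partial_{\gamma\alpha}\phi)$ once one notes that $\partial_{\alpha\alpha}$ in degree $n+1$ carries precisely the prefactor $\alpha^{n}$. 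The second relation is handled the same way, with the more elaborate map $\partial_{\alpha\gamma}$ in place; here the only labour is the bookkeeping of indices and of the $\alpha$-powers attached to the retained and omitted arguments.

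The essential content sits in the two diagonal relations $\partial_{\gamma\gamma}^{2}=\partial_{\alpha\gamma}\,\partial_{\gamma\alpha}$ and $\partial_{\alpha\alpha}^{2}=\partial_{\gamma\alpha}\,\partial_{\alpha\gamma}$. The point to emphasize is that, once the equivariance constraint $\alpha\circ\phi=\phi\circ\alpha^{\otimes n}$ used in \cite{CA} is dropped, the Cheng--Cai operator $\partial_{\gamma\gamma}$ \emph{no longer} squares to zero: its defect is precisely the failure of $\phi$ to commute with $\alpha$. I would recompute $\partial_{\gamma\gamma}^{2}\phi$ by reproducing the combinatorial cancellation that proves $\delta^{2}=0$ in the equivariant setting, but retaining every place where that cancellation silently compared $\alpha^{k}\!\bigl(\phi(\dots)\bigr)$ with $\phi\bigl(\alpha^{k}(\dots)\bigr)$. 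The uncancelled residue consists of terms each carrying a factor $\alpha\circ\phi-\phi\circ\alpha^{\otimes}=\partial_{\gamma\alpha}\phi$; after applying the Hom-Leibniz identity to the nested brackets $[\alpha^{n-1}(x_1),[x_i,x_j]]$ to bring them into the double-bracket shape $[\alpha^{n-1}(x_i),\alpha^{n-1}(x_j)]$, these residual terms reorganize exactly into $\partial_{\alpha\gamma}(\partial_{\gamma\alpha}\phi)$. The fourth relation is proved identically, with the two summands exchanged and the tensor degree shifted by one.

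The step I expect to be the main obstacle is this last reorganization for the first relation: matching, term by term and with correct signs, the residue of $\partial_{\gamma\gamma}^{2}\phi$ against the asymmetric formula for $\partial_{\alpha\gamma}$, which splits into a family of double-bracket terms distinguishing the argument $x_{1}$ and a second family indexed by pairs $2\le i<j$. I would organize the comparison by separating the residue according to which argument plays the distinguished role $x_{1}$, and by invoking the Hom-Leibniz identity to convert each nested bracket into the double-bracket form occurring in $\partial_{\alpha\gamma}$, so that the correspondence between the two sides becomes a clean sign-preserving bijection of summands.
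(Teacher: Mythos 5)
Your reduction of $\partial^2=0$ to the four operator identities $\partial_{\gamma\gamma}\partial_{\gamma\gamma}=\partial_{\alpha\gamma}\partial_{\gamma\alpha}$, $\partial_{\gamma\gamma}\partial_{\alpha\gamma}=\partial_{\alpha\gamma}\partial_{\alpha\alpha}$, $\partial_{\gamma\alpha}\partial_{\gamma\gamma}=\partial_{\alpha\alpha}\partial_{\gamma\alpha}$, $\partial_{\alpha\alpha}\partial_{\alpha\alpha}=\partial_{\gamma\alpha}\partial_{\alpha\gamma}$ is exactly the paper's decomposition, and your description of which identities follow from multiplicativity of $\alpha$ alone and which require the Hom-Leibniz identity together with tracking the defect $\alpha\circ\phi-\phi\circ\alpha^{\otimes}$ matches the mechanism in the paper's computation. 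This is essentially the same approach; note only that the paper carries out the verification explicitly just for $n=1$ and defers the general case to Hurle--Makhlouf, so your (unexecuted) plan for general $n$ is no less complete than what the paper actually does.
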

\begin{proof}
We need to show that $\partial^2=0$. This is same as the following equations:
\begin{align*}
& \partial_{\gamma \gamma} \partial_{\gamma \gamma} + \partial_{\gamma \gamma} \partial_{\alpha \gamma} - \partial_{\alpha \gamma} \partial_{\alpha \alpha} - \partial_{\alpha \gamma} \partial_{\gamma \alpha} = 0,\\
& -\partial_{\alpha \alpha} \partial_{\alpha \alpha} + \partial_{\gamma \alpha} \partial_{\alpha \gamma} - \partial_{\alpha \alpha} \partial_{\gamma \alpha} + \partial_{\gamma \alpha} \partial_{\gamma \gamma} = 0.
\end{align*}
For this we verify the following equations
\begin{align*}
& \partial_{\gamma \gamma} \partial_{\gamma \gamma}= \partial_{\alpha \gamma} \partial_{\gamma \alpha},\\
& \partial_{\gamma \gamma} \partial_{\alpha \gamma} = \partial_{\alpha \gamma} \partial_{\alpha \alpha},\\
& \partial_{\alpha \alpha} \partial_{\alpha \alpha} = \partial_{\gamma \alpha} \partial_{\alpha \gamma},\\
& \partial_{\gamma \alpha} \partial_{\gamma \gamma} = \partial_{\alpha \alpha} \partial_{\gamma \alpha}.
\end{align*}
We only verify above equations for $n=1$. The proof of the general case is lengthy and can be obtained following \cite{HM19}, we omit the detail computation.
Observe that
\begin{align*}
&\partial_{\gamma \gamma} \partial_{\gamma \gamma} \phi(x_1, x_2, x_3) \\
& = [\alpha(x_1), \partial_{\gamma \gamma} \phi(x_2, x_3)] + [\partial_{\gamma \gamma}\phi (x_1, x_3), \alpha(x_2)] - [\partial_{\gamma \gamma}\phi (x_1, x_2), \alpha(x_3)] \\
& - \partial_{\gamma \gamma} \phi ([x_1, x_2], \alpha(x_3)) + \partial_{\gamma \gamma} \phi ([x_1, x_3], \alpha(x_2)) + \partial_{\gamma \gamma} \phi (\alpha(x_1), [x_2, x_3])\\
& = [\alpha (x_1), [x_2 ,\phi(x_3)]] + [\alpha(x_1), [\phi(x_2), x_3]] - [\alpha(x_1), \phi([x_2,x_3])] + [[x_1, \phi(x_3)], \alpha(x_2)] \\
& + [[\phi(x_1), x_3], \alpha(x_2)] - [\phi([x_1, x_3]), \alpha(x_2)] - [[x_1, \phi(x_2)], \alpha(x_3)] - [[\phi(x_1), x_2], \alpha(x_3)]\\
& + [\phi[x_1, x_2], \alpha(x_3)] - [[x_1, x_2], \phi \alpha(x_3)] - [\phi[x_1, x_2], \alpha(x_3)] + \phi([[x_1, x_2], \alpha(x_3)])\\
&+ [[x_1, x_3], \phi\alpha(x_2)] + [\phi[x_1, x_3], \alpha(x_2)] - \phi([[x_1,x_3], \alpha(x_2)]) + [\alpha(x_1), \phi[x_2, x_3]] \\
& + [\phi \alpha(x_1), [x_2, x_3]] - \phi([\alpha(x_1), [x_2, x_3]])\\
& = [[x_1, x_2], \alpha \phi (x_3)] - [[x_1,x_2], \phi \alpha(x_3)] - [[x_1,x_3], \alpha \phi(x_2)] + [[x_1, x_3], \phi \alpha(x_2)] \\
& - [\alpha \phi(x_1), [x_2, x_3]] + [\phi \alpha(x_1), [x_2, x_3]].
\end{align*}
Note that in the above computation the third equality is obtained by using the Hom-Leibniz identity and cancellation of terms.
On the other hand, we have
\begin{align*}
&\partial_{\alpha \gamma} \partial_{\gamma \alpha} \phi (x_1, x_2, x_3)\\
& = [[x_1, x_2], \partial_{\gamma \alpha} \phi (x_3)] - [[x_1, x_3], \partial_{\gamma \alpha} \phi (x_2)] - [\partial_{\gamma \alpha} \phi(x_1), [x_2, x_3]] \\
& = [[x_1, x_2], \alpha \phi (x_3)] - [[x_1,x_2], \phi \alpha(x_3)] - [[x_1,x_3], \alpha \phi(x_2)] + [[x_1, x_3], \phi \alpha(x_2)] \\
& - [\alpha \phi(x_1), [x_2, x_3]] + [\phi \alpha(x_1), [x_2, x_3]].
\end{align*}
Thus, $\partial_{\gamma \gamma} \partial_{\gamma \gamma} = \partial_{\alpha \gamma} \partial_{\gamma \alpha}$.
\begin{align*}
& \partial_{\gamma \alpha} \partial_{\gamma \gamma} \phi (x_1, x_2) \\
& = \alpha (\partial_{\gamma \gamma} \phi(x_1, x_2))- \partial_{\gamma \gamma} \phi (\alpha(x_1), \alpha(x_2)) \\
& = [\alpha(x_1), \alpha \phi (x_2)] + [\alpha \phi (x_1), \alpha(x_2)] - \alpha \phi ([x_1, x_2]) - [\alpha(x_1), \phi \alpha (x_2)] \\
& - [\phi \alpha (x_1), \alpha(x_2)] + \phi \alpha ([x_1, x_2]).
\end{align*}
On the other hand, we have
\begin{align*}
&\partial_{\alpha \alpha} \partial_{\gamma \alpha} \phi (x_1, x_2)\\
& = [\alpha(x_1), \partial_{\gamma \alpha} \phi (x_2)] + [\partial_{\gamma \alpha} \phi (x_1),\alpha(x_2)] - \partial_{\gamma \alpha} \phi ([x_1, x_2])\\
& = [\alpha(x_1), \alpha \phi (x_2)] - [\alpha(x_1), \phi \alpha(x_2)] + [\alpha \phi (x_1), \alpha(x_2)] - [\phi(\alpha(x_1)), \alpha(x_2)] \\
& - \alpha \phi ([x_1, x_2]) + \phi ([\alpha(x_1), \alpha(x_2)]).
\end{align*}
Thus, $\partial_{\gamma \alpha} \partial_{\gamma \gamma} = \partial_{\alpha \alpha} \partial_{\gamma \alpha}$.
As $\widetilde{CL}^1_{\alpha} (L, L) = \lbrace 0 \rbrace$, we have
\begin{align*}
& \partial_{\gamma \gamma} \partial_{\alpha \gamma}\psi (x_1, x_2) = \partial_{\alpha \gamma} \partial_{\alpha \alpha}\psi (x_1, x_2) = 0 ,\\
& \partial_{\alpha \alpha} \partial_{\alpha \alpha}\psi (x_1, x_2) = \partial_{\gamma \alpha} \partial_{\alpha \gamma}\psi (x_1, x_2) =  0.
\end{align*}
Therefore, for $n=1$ we proved $\partial^2 =0$.
\end{proof}

We denote the cohomology of the cochain complex $\big( \widetilde{CL}^\ast(L, L), \partial \big)$ by $\widetilde{HL}^\ast (L, L)$ and call it $\alpha$-type Hom-Leibniz cohomology of $L$ with coefficients in itself.

\begin{remark}
Note that $\alpha$-type cohomology for multiplicative Hom-Leibniz algebras generalizes the cohomology introduced in \cite{CA}. To show this we consider only those elements in $\widetilde{CL}^n(L, L)$ where second summand is zero, that is, $\widetilde{CL}_\alpha^n(L, L) = \lbrace 0 \rbrace.$  Thus, we have elements of the form $(\phi, 0)$. We define a subcomplex of $\widetilde{CL}^n(L, L)$ as follows:
\begin{align*}
{CL}_\alpha^n(L, L) & = \lbrace (\phi, 0) \in \widetilde{CL}^n(L, L) \mid  \partial_{\gamma \alpha} \phi = 0 \rbrace\\
                                & = \lbrace \phi \in \widetilde{CL}_\gamma^n(L, L) \mid \alpha \circ \phi = \phi \circ \alpha^{\otimes n}\rbrace.
\end{align*}
The map $\partial_{\gamma \gamma}$ defines a diffential on this complex and this complex is same as the complex defined in \cite{CA}. Thus, $\alpha$-type cohomology generalizes the cohomology developed in \cite{CA}.
\end{remark}
\section{Deformation theory of Hom-Leibniz algebra structure}\label{sec 4}
In this section, we introduce one-parameter formal deformation theory for multiplicative Hom-Leibniz algebras and discuss how $\alpha$-type cohomology controls deformations. We only consider multiplicative Hom-Leibniz algebras.
\begin{defn}
A one-parameter formal deformation of multiplicative Hom-Leibniz algebra $(L,[.,.],\alpha)$ is given by a $\mathbb{K}[[t]]$-bilinear map $m_t:L[[t]]\times L[[t]]\to L[[t]]$ and a $\mathbb{K}[[t]]$-linear map $\alpha_t:L[[t]]\to L[[t]]$ of the forms
$$m_t=\sum_{i\geq 0} m_it^i\,\text{and}\, \alpha_t=\sum_{i\geq 0}\alpha_it^i,$$
such that,
\begin{enumerate}
\item For all $i\geq 0$, $m_i:L\times L\to L$ is a $\mathbb{K}$-bilinear map, and $\alpha_i:L\to L$ is a $\mathbb{K}$-linear map.
\item $m_0(x,y)=[x,y]$ is the bracket and $\alpha_0=\alpha$ is the structure map of $L$.
\item $(L[[t]],m_t,\alpha_t)$ satisfies the Hom-Leibniz identity, that is, \\ $m_t(\alpha_t(x),m_t(y,z))=m_t(m_t(x,y),\alpha_t(z))-m_t(m_t(x,z),\alpha_t(y)).$\label{deform equ}
\item The map $\alpha_t$ is multiplicative, that is, $m_t (\alpha_t(x), \alpha_t(y)) = \alpha_t (m_t(x, y))$.\label{deform equ mult}
\end{enumerate}
\end{defn}

 Condition  (\ref{deform equ}) in the last definition is equivalent to
 \begin{align}
\label{deform equ 1} \sum_{\substack{i+j+k=n\\i,j,k\geq 0}}m_i(\alpha_j(x),m_k(y,z))-m_i(m_k(x,y),\alpha_j(z))+m_i(m_k(x,z),\alpha_j(y))=0.
 \end{align}
 
 Condition  (\ref{deform equ mult}) in the last definition is equivalent to
  \begin{align}
\label{deform equ 12} \sum_{\substack{i+j+k=n\\i,j,k\geq 0}}m_i(\alpha_j(x), \alpha_k(y))- \sum_{\substack{i+j=n\\i,j\geq 0}}\alpha_i(m_j(x,y))=0.
 \end{align}
 
 For a Hom-Leibniz algebra $(L,[.,.],\alpha)$, a $\alpha_j$-associator is a map,
 \begin{align*}
 Hom(L^{\times 2},L)\times &Hom(L^{\times 2},L)\to Hom(L^{\times 3},L),\\
 &(m_i,m_k)\mapsto m_i\circ_{\alpha_j}m_k,
 \end{align*}
 defined as
 
 $m_i\circ_{\alpha_j}m_k(x,y,z)=m_i(\alpha_j(x),m_k(y,z))-m_i(m_k(x,y),\alpha_j(z))+m_i(m_k(x,z),\alpha_j(y))$.
 By using $\alpha_j$-associator, the deformation equation may be written as 
 \begin{align*}
 &\sum_{i,j,k\geq 0} (m_i\circ_{\alpha_j}m_k)t^{i+j+k}=0,\\
&\sum_{n\geq 0}\bigg( \sum_{\substack{i + j + k= n\\ i, j, k \geq 0}} (m_i\circ_{\alpha_j}m_k) \bigg)t^n=0.  
 \end{align*}
 Thus, for $n=0,1,2,\ldots$,  we have the following infinite equations:
\begin{align}
  \label{deform equ 2}\sum_{\substack{i + j + k= n\\ i, j, k \geq 0}} (m_i\circ_{\alpha_j}m_k)=0.
 \end{align}
 We can rewrite the Equation (\ref{deform equ 2}) as follows:
\begin{align} \label{deform equ 3}
( \partial_{\gamma \gamma} m_n  -  \partial_{\alpha \gamma} \alpha_n ) (x, y, z) = - \sum_{\substack{i + j + k= n\\ i, j, k > 0}} (m_i\circ_{\alpha_j}m_k)(x, y, z),
\end{align} 
 where
 \begin{align}
  \partial_{\gamma \gamma} m_n(x,y,z)& = [\alpha(x), m_n(y,z)] + [m_n(x,z), \alpha(y)] - [m_n(x, y), \alpha(z)] \\
&  - m_n ([x, y], \alpha(z)) + m_n ([x,z], \alpha(y)) + m_n (\alpha(x), [y,z]), \nonumber
\end{align}  
and 
\begin{align}
\partial_{\alpha \gamma} \alpha_n (x,y,z) =  - [\alpha_n(x), [y,z]] + [[x,y], \alpha_n(z)] - [[x,z], \alpha_n(y)].
\end{align}

 From the multiplicativity of $\alpha_t$, we have
 \begin{align} \label{df alpha 1}
 \sum_{\substack{i + j + k= n\\ i, j, k \geq 0}} m_i (\alpha_j (x), \alpha_k(y)) - \sum_{\substack{i + j = n\\ i, j \geq 0}} \alpha_i (m_j (x, y)) = 0.
 \end{align}
 We can rewrite the Equation (\ref{df alpha 1}) as follows:
 \begin{align}
 (\partial_{\alpha \alpha} \alpha_n - \partial_{\gamma \alpha} m_n) (x, y) = - \sum_{\substack{i + j + k= n\\ i, j, k > 0}} m_i (\alpha_j (x), \alpha_k(y)) + \sum_{\substack{i + j = n\\ i, j > 0}} \alpha_i (m_j (x, y)) ,
 \end{align}
 where
 \begin{align}
& \partial_{\alpha \alpha} \alpha_n (x, y) = [\alpha(x), \alpha_n(y)] + [\alpha_n(x), \alpha(y)] - \alpha_n ([x, y]),\\
 & \partial_{\gamma \alpha} m_n(x, y) = \alpha m_n(x, y) - m_n (\alpha(x), \alpha(y)).
 \end{align}
 
 For $n=0$,
 \begin{align*}
&m_0\circ_{\alpha_0}m_0=0,\\
&m_0(\alpha_0(x),m_0(y,z))-m_0(m_0(x,y),\alpha_0(z))+m_0(m_0(x,z),\alpha_0(y))=0,\\
&[\alpha(x),[y,z]]-[[x,y],\alpha(z)]+[[x,z],\alpha(y)]=0.
  \end{align*}
  This is the original Hom-Leibniz relation and from the Equation (\ref{df alpha 1}) we have
  \begin{align*}
  & m_0(\alpha_0(x), \alpha_0(y)) -\alpha_0 (m_0(x, y)) =0, \\
  & \alpha[x, y] = [\alpha(x), \alpha(y)].
  \end{align*}
  This just shows $\alpha$ is multiplicative.
  
  For $n=1$, from the Equation (\ref{deform equ 2}) we have
  $$m_0\circ_{\alpha_0}m_1+m_1\circ_{\alpha_0}m_0+m_0\circ_{\alpha_1}m_0=0,$$
  $[\alpha(x),m_1(y,z)]-[m_1(x,y),\alpha(z)]+[m_1(x,z),\alpha(y)]+m_1(\alpha(x),[y,z])-m_1([x,y],\alpha(z))+m_1([x,z],\alpha(y)) + [\alpha_1(x), [y, z]] - [[x,y], \alpha_1(z)] + [[x,z], \alpha_1(y)]=0$. 
  
This is same as
$$\partial_{\gamma \gamma} m_1(x,y,z) - \partial_{\alpha \gamma} \alpha_1 (x, y, z)=0.$$
Now from the multiplicative part of the deformation, we have
$$[\alpha(x), \alpha_1(y)] + [\alpha_1(x), \alpha(y)] + m_1(\alpha(x), \alpha(y)) - \alpha(m_1(x,y)) - \alpha_1 [x, y] = 0.$$
This is same as 
$$\partial_{\alpha \alpha} \alpha_1(x, y) - \partial_{\gamma \alpha} m_1 (x, y)= 0.$$
Thus, we have
$$\partial (m_1, \alpha_1) = 0.$$
\begin{defn}
The infinitesimal of the deformation $(m_t, \alpha_t)$ is the pair $(m_1, \alpha_1)$. Suppose more  generally that $(m_n, \alpha_n)$ is the first non-zero term of $(m_t, \alpha_t)$ after $(m_0, \alpha_0)$, such $(m_n, \alpha_n)$ is called a $n$-infinitesimal of the deformation.
\end{defn}
Therefore, we have the following theorem.
\begin{thm}\label{cocycle}
Let $(L,[.,.],\alpha)$ be a Hom-Leibniz algebra, and $(L_t,m_t,\alpha_t)$ be its one-parameter deformation then the infinitesimal of the deformation is a $2$-cocycle of the $\alpha$-type Hom-Leibniz cohomology.
\end{thm}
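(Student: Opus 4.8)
The plan is to prove the statement by showing directly that $\partial(m_1,\alpha_1)=0$, exploiting the direct-sum structure of the differential. Since $\widetilde{CL}^2(L,L)=\widetilde{CL}_\gamma^2(L,L)\oplus\widetilde{CL}_\alpha^2(L,L)$ and, by definition,
$$\partial(m_1,\alpha_1)=(\partial_{\gamma\gamma}m_1-\partial_{\alpha\gamma}\alpha_1,\ \partial_{\gamma\alpha}m_1-\partial_{\alpha\alpha}\alpha_1),$$
the single cocycle equation $\partial(m_1,\alpha_1)=0$ is equivalent to the two separate equations $\partial_{\gamma\gamma}m_1-\partial_{\alpha\gamma}\alpha_1=0$ and $\partial_{\gamma\alpha}m_1-\partial_{\alpha\alpha}\alpha_1=0$. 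It therefore suffices to produce each of these from the deformation data, and the key point is that the two equations come from the two \emph{different} defining conditions of a deformation.

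First I would isolate the coefficient of $t^1$ in the Hom-Leibniz identity for $(L[[t]],m_t,\alpha_t)$. By Equation (\ref{deform equ 2}) with $n=1$ this reads $m_0\circ_{\alpha_0}m_1+m_1\circ_{\alpha_0}m_0+m_0\circ_{\alpha_1}m_0=0$, and rewriting it through Equation (\ref{deform equ 3}) yields exactly $\partial_{\gamma\gamma}m_1-\partial_{\alpha\gamma}\alpha_1=0$; the obstruction sum on the right-hand side of (\ref{deform equ 3}) is empty for $n=1$, since there are no indices $i,j,k>0$ with $i+j+k=1$. Next I would isolate the coefficient of $t^1$ in the multiplicativity condition (\ref{df alpha 1}) with $n=1$, which rewrites as $\partial_{\alpha\alpha}\alpha_1-\partial_{\gamma\alpha}m_1=0$, again with empty right-hand side. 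Combining the two gives $\partial(m_1,\alpha_1)=0$, so $(m_1,\alpha_1)$ is a $2$-cocycle of the $\alpha$-type Hom-Leibniz cohomology.

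The same argument applies verbatim to an $n$-infinitesimal $(m_n,\alpha_n)$. When $m_1=\cdots=m_{n-1}=0$ and $\alpha_1=\cdots=\alpha_{n-1}=0$, every term in the obstruction sums $\sum_{i+j+k=n,\ i,j,k>0}m_i\circ_{\alpha_j}m_k$ and $\sum_{i+j+k=n,\ i,j,k>0}m_i(\alpha_j(\cdot),\alpha_k(\cdot))$, $\sum_{i+j=n,\ i,j>0}\alpha_i(m_j(\cdot))$ involves at least one index strictly between $0$ and $n$, hence a vanishing factor. Thus the right-hand sides of (\ref{deform equ 3}) and (\ref{df alpha 1}) vanish at order $n$, and the degree-$n$ equations collapse to $\partial(m_n,\alpha_n)=0$.

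Since the order-$1$ computations preceding the statement already carry out the two rewritings explicitly, there is no hard analytic step here. The only thing that genuinely needs care is the bookkeeping: one must match the order-$1$ term of the Hom-Leibniz identity to the $\widetilde{CL}_\gamma$-component of $\partial$ and the order-$1$ term of the multiplicativity condition to the $\widetilde{CL}_\alpha$-component, and then confirm that the obstruction sums in (\ref{deform equ 3}) and (\ref{df alpha 1}) are empty. I would emphasize that \emph{both} conditions are indispensable—the Hom-Leibniz identity alone supplies only half of the cocycle condition, and it is multiplicativity of $\alpha_t$ that forces the second component to vanish.
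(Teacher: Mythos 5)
Your proposal is correct and follows essentially the same route as the paper: the paper's argument is precisely the computation preceding the theorem, which extracts the coefficient of $t^1$ from the Hom-Leibniz identity to get $\partial_{\gamma\gamma}m_1-\partial_{\alpha\gamma}\alpha_1=0$ and from the multiplicativity condition to get $\partial_{\alpha\alpha}\alpha_1-\partial_{\gamma\alpha}m_1=0$, then assembles these into $\partial(m_1,\alpha_1)=0$. Your additional remark handling the $n$-infinitesimal case (the obstruction sums vanish because every term contains a factor $m_i$ or $\alpha_i$ with $0<i<n$) is the natural extension the paper leaves implicit.
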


\subsection{Obstructions of deformations}
Now we discuss obstructions of deformations for multiplicative Hom-Leibniz algebras from the cohomological point of view.
\begin{defn}
A $n$-deformation of a Hom-Leibniz algebra is a formal deformation of the forms
$$m_t=\sum^n_{i=0}m_it^i,~~~ \alpha_t=\sum^n_{i=0}\alpha_it^i,$$
\end{defn}
such that $m_t$ and $\alpha_t$ satisfies the Hom-Leibniz identity,
$$m_t(\alpha_t(x),m_t(y,z))=m_t(m_t(x,y),\alpha_t(z))-m_t(m_t(x,z),\alpha_t(y)),$$\label{n-deform}
and $\alpha_t$ is multiplicative,
$$m_t (\alpha_t(x), \alpha_t(y)) = \alpha_t (m_t(x, y)).$$

We say a $n$-deformation $(m_t, \alpha_t)$ of a Hom-Leibniz algebra is extendable to a $(n+1)$-deformation if there is an element $m_{n+1}\in \widetilde{CL}_\gamma^2(L, L)$ and $\alpha_{n+1} \in \widetilde{CL}_\alpha^2(L, L)$ such that
\begin{align*}
&\bar{m_t}=m_t+m_{n+1}t^{n+1},\\
&\bar{\alpha_t}=\alpha_t+\alpha_{n+1}t^{n+1},
\end{align*}
and $(\bar{m_t}, \bar{\alpha_t})$ satisfies all the conditions of one-parameter formal deformations.

The $(n+1)$-deformation $(\bar{m_t}, \bar{\alpha_t})$ gives us the following equations.
\begin{align}
\label{obs deform equ 1} \sum_{\substack{i+j+k=n+1\\i,j,k\geq 0}}m_i(\alpha_j(x),m_k(y,z))-m_i(m_k(x,y),\alpha_j(z))+m_i(m_k(x,z),\alpha_j(y))=0.
 \end{align}
 \begin{align}
\label{obs deform equ 12} \sum_{\substack{i+j+k=n+1\\i,j,k\geq 0}}m_i(\alpha_j(x), \alpha_k(y))- \sum_{\substack{i+j=n+1\\i,j\geq 0}}\alpha_i(m_j(x,y))=0.
 \end{align}
 This is same as the following equations
 \begin{align*}
&( \partial_{\gamma \gamma} m_{n+1}  -  \partial_{\alpha \gamma} \alpha_{n+1} ) (x, y, z) \\
&=-\sum_{\substack{i+j+k=n+1\\i,j,k>0}}m_i(\alpha_j(x),m_k(y,z))-m_i(m_k(x,y),\alpha_j(z))+m_i(m_k(x,z),\alpha_j(y))\\
&=-\sum_{\substack{i+j+k=n+1\\i,j,k>0}}m_i\circ_{\alpha_j}m_k.
\end{align*}
\begin{align*}
 (\partial_{\alpha \alpha} \alpha_{n+1} - \partial_{\gamma \alpha} m_{n+1}) (x, y) = - \sum_{\substack{i + j + k= n+1\\ i, j, k > 0}} m_i (\alpha_j (x), \alpha_k(y)) + \sum_{\substack{i + j = n+1\\ i, j > 0}} \alpha_i (m_j (x, y)).
\end{align*}
We define the $n$th obstruction to extend a deformation of Hom-Leibniz algebra of order $n$ to order $n+1$  as  $\text{Obs}^n = (\text{Obs}^n_\gamma, \text{Obs}^n_\alpha)$, where
\begin{align}
\label{obs equ 222}&\text{Obs}^n_\gamma (x, y, z):=\sum_{\substack{i+j+k=n+1\\i,j,k>0}}(m_i\circ_{\alpha_j}m_k) (x, y, z)= (\partial_{\gamma \gamma} m_{n+1}  -  \partial_{\alpha \gamma} \alpha_{n+1})(x, y, z),\\
&\text{Obs}^n_\alpha (x, y)\\ \nonumber
&:=- \sum_{\substack{i + j + k= n+1\\ i, j, k > 0}} m_i (\alpha_j (x), \alpha_k(y)) + \sum_{\substack{i + j = n+1\\ i, j > 0}} \alpha_i (m_j (x, y)) \\ \nonumber
&= (\partial_{\alpha \alpha} \alpha_{n+1} - \partial_{\gamma \alpha} m_{n+1}) (x, y).
\end{align}
Thus,  $(\text{Obs}^n_\gamma, \text{Obs}^n_\alpha) \in \widetilde{CL}^3(L, L)$ and $(\text{Obs}^n_\gamma, \text{Obs}^n_\alpha) = \partial (m_{n+1}, \alpha_{n+1})$.
\begin{theorem}
A deformation of order $n$ extends to a deformation of order $n+1$ if and only if cohomology class of  $\text{Obs}^n$ vanishes.
\end{theorem}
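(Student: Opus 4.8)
The plan is to prove the two directions separately, using the key observation that the obstruction element $\mathrm{Obs}^n = (\mathrm{Obs}^n_\gamma, \mathrm{Obs}^n_\alpha)$ is \emph{always} a well-defined cochain in $\widetilde{CL}^3(L,L)$ built purely from the already-constructed lower-order terms $m_1,\dots,m_n$ and $\alpha_1,\dots,\alpha_n$. The forward direction is essentially a restatement of the displayed equations preceding the theorem: if the $n$-deformation extends to an $(n+1)$-deformation, then $m_{n+1}$ and $\alpha_{n+1}$ exist and satisfy the equations (\ref{obs deform equ 1}) and (\ref{obs deform equ 12}), which we have already rewritten as $\mathrm{Obs}^n_\gamma = (\partial_{\gamma\gamma} m_{n+1} - \partial_{\alpha\gamma}\alpha_{n+1})$ and $\mathrm{Obs}^n_\alpha = (\partial_{\alpha\alpha}\alpha_{n+1} - \partial_{\gamma\alpha} m_{n+1})$. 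By the definition of $\partial$ on $\widetilde{CL}^2(L,L)$, this says precisely $\mathrm{Obs}^n = \partial(m_{n+1},\alpha_{n+1})$, so $\mathrm{Obs}^n$ is a coboundary and its cohomology class vanishes.

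For the converse, I would assume the cohomology class of $\mathrm{Obs}^n$ is zero. The first thing to establish is that $\mathrm{Obs}^n$ is a $3$-cocycle, i.e. $\partial(\mathrm{Obs}^n) = 0$; only then does ``vanishing cohomology class'' have content, and only then does exactness of the complex let us solve for the extension. This cocycle condition should follow from the associativity/compatibility identities among the $m_i,\alpha_j$ for indices summing to at most $n$ (the lower-order deformation equations already hold), combined with $\partial^2 = 0$ from the previous theorem. Granting $\partial(\mathrm{Obs}^n)=0$ and the hypothesis that its class vanishes, there exists a pair $(m_{n+1},\alpha_{n+1}) \in \widetilde{CL}_\gamma^2(L,L)\oplus\widetilde{CL}_\alpha^2(L,L)$ with $\partial(m_{n+1},\alpha_{n+1}) = \mathrm{Obs}^n$. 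Reading the two components of this equation backwards recovers exactly (\ref{obs deform equ 1}) and (\ref{obs deform equ 12}), which are the Hom-Leibniz identity and multiplicativity at order $n+1$ for $\bar m_t = m_t + m_{n+1}t^{n+1}$ and $\bar\alpha_t = \alpha_t + \alpha_{n+1}t^{n+1}$. Since all lower-order equations are untouched by adding a degree-$(n+1)$ term, $(\bar m_t,\bar\alpha_t)$ is a genuine $(n+1)$-deformation.

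The main obstacle I anticipate is verifying that $\mathrm{Obs}^n$ is indeed a cocycle, $\partial(\mathrm{Obs}^n)=0$. This is the computational heart of the argument and is not automatic: it requires expanding $\partial_{\gamma\gamma}\mathrm{Obs}^n_\gamma - \partial_{\alpha\gamma}\mathrm{Obs}^n_\alpha$ and $\partial_{\alpha\alpha}\mathrm{Obs}^n_\alpha - \partial_{\gamma\alpha}\mathrm{Obs}^n_\gamma$ in terms of the $\alpha_j$-associators $m_i\circ_{\alpha_j}m_k$ and the multiplicativity defects, and showing everything cancels using the order-$\le n$ deformation relations (\ref{deform equ 2}) and (\ref{df alpha 1}). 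In the classical associative and Leibniz settings this is a standard but lengthy bookkeeping of shuffle terms; here the presence of the structure map $\alpha$ and the two-component complex doubles the work. I expect this verification to proceed in parallel with, and to reuse the cancellations from, the proof that $\partial^2 = 0$ established earlier, so I would organize the computation so as to isolate the terms involving only $m_i,\alpha_j$ with $i,j,k>0$ summing to $n+1$ and show they form a $\partial$-image term by term.
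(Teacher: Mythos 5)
Your proposal follows essentially the same route as the paper: both directions reduce to the identity $\mathrm{Obs}^n = \partial(m_{n+1},\alpha_{n+1})$, with the forward direction reading this as ``$\mathrm{Obs}^n$ is a coboundary'' and the converse reading the two components of the equation back as the order-$(n+1)$ Hom-Leibniz and multiplicativity conditions for $\bar m_t$, $\bar\alpha_t$. The one place you go beyond the paper is in insisting that $\partial(\mathrm{Obs}^n)=0$ be verified from the lower-order deformation equations alone --- the paper never carries out this computation in the non-equivariant setting (and in the equivariant section only asserts it by assuming the extension already exists) --- so your identification of this as the unproved computational heart is accurate; note, however, that if ``cohomology class vanishes'' is read simply as ``$\mathrm{Obs}^n$ is a coboundary,'' the stated equivalence goes through exactly as you and the paper argue, without that verification.
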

\begin{proof}
Suppose a deformation $(m_t, \alpha_t)$ of order $n$ extends to a deformation of order $n+1$. From the obstruction equations, we have
$$\text{Obs}^n = (\text{Obs}^n_\gamma, \text{Obs}^n_\alpha) = \partial (m_{n+1}, \alpha_{n+1}).$$
As $\partial \circ \partial=0$, we get the cohomology class of  $\text{Obs}^n$ vanishes.

Conversely, suppose the cohomology class of  $\text{Obs}^n$ vanishes, that is,
$$\text{Obs}^n=\partial (m_{n+1}, \alpha_{n+1}),$$
for some $2$-cochains $(m_{n+1}, \alpha_{n+1})$. We define $(m'_t, \alpha'_t)$ extending the deformation $(m_t, \alpha_t)$ of order $n$ as follows-
\begin{align*}
&m'_t=m_t+m_{n+1}t^{n+1},\\
&\alpha'_t=\alpha_t+ \alpha_{n+1}t^{n+1}.
\end{align*}
The map $m'_t, \alpha'_t$ satisfy the following equations for all $x,y,z\in L$.
\begin{align*}
 &\sum_{\substack{i+j+k=n+1\\i,j,k\geq 0}}m_i(\alpha_j(x),m_k(y,z))-m_i(m_k(x,y),\alpha_j(z))+m_i(m_k(x,z),\alpha_j(y))=0,\\
 &\sum_{\substack{i+j+k=n+1\\i,j,k\geq 0}}m_i(\alpha_j(x), \alpha_k(y))- \sum_{\substack{i+j=n+1\\i,j\geq 0}}\alpha_i(m_j(x,y))=0.
 \end{align*}
 Thus, $(m'_t, \alpha'_t)$ is a deformation of order $n+1$ which extends the deformation $(m_t, \alpha_t)$ of order $n$.
\end{proof}
\begin{cor}
If $\widetilde{HL}^3 (L, L)=0$ then any $2$-cocycle gives a one-parameter formal deformation of $(L,[.,.],\alpha)$.
\end{cor}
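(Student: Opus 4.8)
The plan is to integrate the given $2$-cocycle into a full formal deformation by an inductive, order-by-order construction, using the previous obstruction theorem as the engine of the induction. First I would record the base case: if $(m_1, \alpha_1) \in \widetilde{CL}^2(L,L)$ is a $2$-cocycle, then by definition $\partial(m_1, \alpha_1) = 0$, which by the explicit $n=1$ computation carried out above is exactly the pair of conditions $\partial_{\gamma \gamma} m_1 - \partial_{\alpha \gamma}\alpha_1 = 0$ and $\partial_{\alpha \alpha}\alpha_1 - \partial_{\gamma \alpha} m_1 = 0$. These are precisely the requirements that $m_t = m_0 + m_1 t$ and $\alpha_t = \alpha_0 + \alpha_1 t$ satisfy the Hom-Leibniz identity and the multiplicativity condition modulo $t^2$. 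Hence a $2$-cocycle is the same datum as a $1$-deformation, and this is the starting point of the induction.

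Next I would run the inductive step. Suppose the $2$-cocycle has been extended to an $n$-deformation $(m_t, \alpha_t)$ with $m_t = \sum_{i=0}^n m_i t^i$ and $\alpha_t = \sum_{i=0}^n \alpha_i t^i$. The obstruction to extending it one step further is the cochain $\text{Obs}^n = (\text{Obs}^n_\gamma, \text{Obs}^n_\alpha) \in \widetilde{CL}^3(L,L)$, assembled out of the lower-order associators $m_i \circ_{\alpha_j} m_k$ and the lower-order multiplicativity defects, exactly as defined before the previous theorem. The decisive claim is that $\text{Obs}^n$ is a $3$-cocycle, namely $\partial(\text{Obs}^n) = 0$. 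Granting this, the hypothesis $\widetilde{HL}^3(L,L) = 0$ forces $\text{Obs}^n$ to be a coboundary, so its cohomology class vanishes; the previous theorem then yields $m_{n+1} \in \widetilde{CL}_\gamma^2(L,L)$ and $\alpha_{n+1} \in \widetilde{CL}_\alpha^2(L,L)$ with $\text{Obs}^n = \partial(m_{n+1}, \alpha_{n+1})$, and $(m_t + m_{n+1} t^{n+1},\, \alpha_t + \alpha_{n+1} t^{n+1})$ is an $(n+1)$-deformation. Iterating over all $n$ produces formal power series $m_t$ and $\alpha_t$ solving the deformation equations to all orders, that is, a one-parameter formal deformation of $(L, [.,.], \alpha)$ whose infinitesimal is the prescribed $2$-cocycle.

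The main obstacle is the closedness claim $\partial(\text{Obs}^n) = 0$, which is the technical heart of the argument and is what allows $\widetilde{HL}^3(L,L) = 0$ to kill the obstruction. I would prove it by expanding $\partial(\text{Obs}^n)$ through the explicit formulas for $\partial_{\gamma \gamma}, \partial_{\alpha \gamma}, \partial_{\alpha \alpha}, \partial_{\gamma \alpha}$, regrouping the resulting terms according to the associator structure, and then using the deformation equations already satisfied in degrees $\leq n$, namely $\sum_{i+j+k=m} m_i \circ_{\alpha_j} m_k = 0$ together with the corresponding multiplicativity relations for every $m \leq n$, to cancel everything. This is the same kind of bookkeeping that underlies the verification of $\partial^2 = 0$ for the $\alpha$-type complex, and it can be organized following \cite{HM19} and \cite{CA}. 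A conceptually cleaner route, which I would prefer if it can be set up, is to upgrade $\widetilde{CL}^\ast(L,L)$ to a differential graded Lie (or $L_\infty$) algebra analogous to the graded Lie algebra on $CH^\ast(L,L)$ built in Section \ref{sec 2}: then the deformation equation becomes a Maurer--Cartan equation, the obstruction is essentially a bracket $\tfrac12 \sum_{i+j=n+1,\, i,j>0}[\xi_i, \xi_j]$ of lower-order terms $\xi_i = (m_i,\alpha_i)$, and $\partial(\text{Obs}^n)=0$ becomes an immediate formal consequence of the graded Jacobi identity together with $\partial^2 = 0$, bypassing the explicit cancellations entirely.
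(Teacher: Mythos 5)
Your plan is correct and is essentially the argument the paper intends: the corollary is stated without proof as an immediate consequence of the preceding obstruction theorem, via exactly the order-by-order induction you describe (a $2$-cocycle is a $1$-deformation by the displayed $n=1$ computation, and $\widetilde{HL}^3(L,L)=0$ kills each $\text{Obs}^n$ so the extension never stops). The one point worth stressing is the step you yourself single out as the technical heart: the claim that $\text{Obs}^n$ is a $3$-cocycle. Without it, the vanishing of $\widetilde{HL}^3(L,L)$ gives you nothing, since only cocycles have cohomology classes. The paper never proves this in the non-equivariant setting, and in the equivariant section the corresponding lemma is justified by writing $\text{Obs}_G^n=\partial(m_{n+1},\alpha_{n+1})$ --- which presupposes the very extension one is trying to construct and is therefore circular as a proof that the obstruction built from an unextended $n$-deformation is closed. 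So you cannot simply cite the paper for this step; you would have to carry out one of the two routes you sketch, either the direct expansion of $\partial(\text{Obs}^n)$ using the deformation equations in degrees $\leq n$, or the differential graded Lie algebra packaging in which $\text{Obs}^n$ is a sum of brackets of lower-order terms and closedness follows from the graded Jacobi identity. Note that the graded Lie structure constructed in the section on the Gerstenhaber bracket lives on the complex $CH^\ast(L,L)$ controlling only the bracket, not on the $\alpha$-type complex $\widetilde{CL}^\ast(L,L)$ that also records deformations of the structure map, so the cleaner route requires genuinely new work rather than a citation. With that step supplied, your induction is complete and matches the paper's intended proof.
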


\subsection{Equivalent and trivial deformations}
 Suppose $L_t=(L,m_t,\alpha_t)$ and $L\rq_t=(L,m\rq_t,\alpha\rq_t)$ be two one-parameter Hom-Leibniz algebra deformations of $(L,[.,.],\alpha)$, where $m_t=\sum_{i\geq 0} m_it^i,\, \alpha_t=\sum_{i\geq 0}\alpha_it^i$ and $m\rq_t=\sum_{i\geq 0} m\rq_it^i,\, \alpha\rq_t=\sum_{i\geq 0}\alpha\rq_it^i$. 
\begin{defn}
 Two deformations $L_t$ and $L\rq_t$ are said to be equivalent if there exists a $\mathbb{K}[[t]]$-linear isomorphism $\Psi_t:L[[t]]\to L[[t]]$ of the form $\Psi_t=\sum_{i\geq 0}\psi_it^i$, where $\psi_0=Id$ and $\psi_i:L\to L$ are $\mathbb{K}$-linear maps such that the following relations holds:
 \begin{align}\label{equivalent}
 \Psi_t\circ m_t\rq=m_t\circ (\Psi_t\otimes \Psi_t)\,\,\,\text{and}\,\,\, \alpha_t\circ \Psi_t=\Psi_t\circ \alpha\rq_t.
 \end{align}
 \end{defn}
 \begin{defn}
 A deformation $L_t$ of a Hom-Leibniz algebra $L$ is called trivial if $L_t$ is equivalent to $L$. A Hom-Leibniz algebra $L$ is called rigid if it has only trivial deformation upto equivalence.
 \end{defn}
 Condition (\ref{equivalent}) may be written as
 \label{equivalent 11}$$\Psi_t(m\rq_t(x,y))=m_t(\Psi_t(x),\Psi_t(y))\,\,\, \text{and}\,\,\, \alpha_t(\Psi_t(x))=\Psi_t(\alpha\rq_t(x)),\,\,\,\text{for all}~ x,y\in L.$$
 The above conditions is equivalent to the following equations:
 \begin{align}
 &\sum_{i\geq 0}\psi_i\bigg( \sum_{j\geq 0}m\rq_j(x,y)t^j \bigg)t^i=\sum_{i\geq 0}m_i\bigg( \sum_{j\geq 0}\psi_j(x)t^j,\sum_{k\geq 0}\psi_k(y)t^k \bigg)t^i,\\
 &\sum_{i\geq 0}\alpha_i\bigg( \sum_{j\geq 0}\psi_j(x)t^j \bigg)t^i=\sum_{i\geq 0}\psi_i\bigg( \sum_{j\geq 0}\alpha\rq_j(x)t^j \bigg)t^i.
 \end{align}
 This is same as the following equations:
 \begin{align}
 \label{equivalent 10}&\sum_{i,j\geq 0}\psi_i(m\rq_j(x,y))t^{i+j}=\sum_{i,j,k\geq 0}m_i(\psi_j(x),\psi_k(y))t^{i+j+k},\\
 &\label{equivalent 101}\sum_{i,j \geq 0}\alpha_i(\psi_j(x))t^{i+j}=\sum_{i,j\geq 0}\psi_i(\alpha\rq_j(x))t^{i+j}.
 \end{align}
 Comparing constant terms on both sides of the above equations, we have
 \begin{align*}
 &m\rq_0(x,y)=m_0(x,y)=[x,y],\,\,\,\text{as}\,\,\,\psi_0=Id,\\
 &\alpha_0(x)=\alpha\rq_0(x)=\alpha(x).
  \end{align*}
  Now comparing coefficients of $t$, we have
  \begin{align}\label{equivalent main}
&m\rq_1(x,y)+\psi_1(m\rq_0(x,y))=m_1(x,y)+m_0(\psi_1(x),y)+m_0(x,\psi(y)),\\
\label{equivalent main 1}&\alpha_1(x)+\alpha_0(\psi_1(x))=\alpha\rq_1(x)+\psi_1(\alpha\rq_0(x)).
  \end{align}
  The Equations (\ref{equivalent main}) and (\ref{equivalent main 1}) are same as
  \begin{align*}
& m\rq_1(x,y)-m_1(x,y)=[\psi_1(x),y]+[x,\psi_1(y)]-\psi_1([x,y])=\partial_{\gamma \gamma} \psi_1(x,y).\\
& \alpha_1\rq (x) - \alpha_1(x) = \alpha(\psi_1(x)) - \psi_1(\alpha(x)) = \partial_{\gamma \alpha} \psi_1(x).
\end{align*}
Thus, we have the following proposition.
  \begin{prop}
 Two equivalent deformations have cohomologous infinitesimals.
  \end{prop}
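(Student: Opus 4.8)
The plan is to reuse the coefficient comparison carried out above and to recognize the difference of the two infinitesimals as the image under $\partial$ of a single $1$-cochain. Recall that equivalence of $L_t$ and $L'_t$ is witnessed by $\Psi_t = \sum_{i\geq 0}\psi_i t^i$ with $\psi_0 = \mathrm{Id}$, and that extracting the coefficient of $t$ from the equivalence relations (\ref{equivalent 10}) and (\ref{equivalent 101}) produces exactly
$$m'_1(x,y) - m_1(x,y) = \partial_{\gamma\gamma}\psi_1(x,y), \qquad \alpha'_1(x) - \alpha_1(x) = \partial_{\gamma\alpha}\psi_1(x).$$
So the whole content of the statement is already visible once these coefficients are matched; what remains is to interpret these two identities cohomologically.

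The key step is to package the pair $(\partial_{\gamma\gamma}\psi_1, \partial_{\gamma\alpha}\psi_1)$ as $\partial$ applied to a $1$-cochain. Since $\widetilde{CL}^1_\alpha(L,L) = \{0\}$, every element of $\widetilde{CL}^1(L,L)$ has the form $(\psi_1, 0)$ with $\psi_1 \in \mathrm{Hom}_\mathbb{K}(L,L)$, and evaluating the differential on such a cochain gives
$$\partial(\psi_1, 0) = \bigl(\partial_{\gamma\gamma}\psi_1 - \partial_{\alpha\gamma}(0),\ \partial_{\gamma\alpha}\psi_1 - \partial_{\alpha\alpha}(0)\bigr) = (\partial_{\gamma\gamma}\psi_1,\ \partial_{\gamma\alpha}\psi_1).$$
Matching this with the two identities above yields $(m'_1, \alpha'_1) - (m_1, \alpha_1) = \partial(\psi_1, 0)$, i.e.\ the two $2$-cocycles differ by a coboundary and hence define the same class in $\widetilde{HL}^2(L,L)$.

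I do not expect a genuine obstacle. By Theorem \ref{cocycle} each infinitesimal is already a $2$-cocycle, so the argument is purely one of exhibiting a coboundary; the only point demanding attention is the bookkeeping that the $\alpha$-component of a $1$-cochain is forced to vanish, which is precisely what makes the cross-terms $\partial_{\alpha\gamma}$ and $\partial_{\alpha\alpha}$ drop out of $\partial(\psi_1,0)$ and leaves the clean pair $(\partial_{\gamma\gamma}\psi_1, \partial_{\gamma\alpha}\psi_1)$. With that observation the proof reduces to matching the two components and invoking $\partial^2 = 0$ to phrase the conclusion at the level of cohomology classes.
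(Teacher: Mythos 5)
Your proposal is correct and follows essentially the same route as the paper: compare coefficients of $t$ in the equivalence relations to get $m'_1-m_1=\partial_{\gamma\gamma}\psi_1$ and $\alpha'_1-\alpha_1=\partial_{\gamma\alpha}\psi_1$, then recognize the difference of infinitesimals as the coboundary $\partial(\psi_1,0)$. The only differences are that the paper runs the identical computation at the level of $n$-infinitesimals (with $\psi_n$ in place of $\psi_1$), while your explicit packaging of the pair as $\partial(\psi_1,0)$ via $\widetilde{CL}^1_\alpha(L,L)=\{0\}$ spells out a step the paper leaves implicit.
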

  \begin{proof}
  Suppose $L_t=(L,m_t,\alpha_t)$ and $L\rq_t=(L,m\rq_t,\alpha\rq_t)$ be two equivalent one-parameter Hom-Leibniz deformations of $(L,[.,.],\alpha)$. Suppose $(m_n, \alpha_n)$ and $(m\rq_n, \alpha\rq_n)$ be two $n$-infinitesimals of the deformations $(m_t, \alpha_t)$ and $(m\rq_t, \alpha\rq_t)$ respectively. Using Equation (\ref{equivalent 10}) we get,
  \begin{align*}
  &m\rq_n(x,y)+\psi_n(m\rq_0(x,y))=m_n(x,y)+m_0(\psi_n(x),y)+m_0(x,\psi_n(y)),\\
  &m\rq_n(x,y)-m_n(x,y)=m_0(\psi_n(x),y)+m_0(x,\psi_n(y))-\psi_n(m\rq_0(x,y)),\\
 &m\rq_n(x,y)-m_n(x,y)=[\psi_n(x),y]+[x,\psi_n(y)]-\psi_n([x,y])=\partial_{\gamma \gamma} \psi_n(x,y). 
  \end{align*}
  Using equation (\ref{equivalent 101}) we get,
  \begin{align*}
&\alpha_0(\psi_n(x)) - \psi_0(\alpha\rq_n(x)) + \alpha_n(\psi_0(x)) - \psi_n(\alpha\rq_0(x))=0,\\
&\alpha_n(x) - \alpha\rq_n(x) = \psi_n(\alpha(x)) - \alpha(\psi_n(x)) = \partial_{\gamma \alpha}\psi_n(x).
  \end{align*}
  Thus, infinitesimals of two deformations determines same cohomology class.
    \end{proof}
  \begin{theorem}
  A non-trivial deformation of a Hom-Leibniz algebra is equivalent to a deformation whose infinitesimal is not a coboundary.
  \end{theorem}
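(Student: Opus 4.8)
The plan is to run the classical ``kill the leading term'' reduction, adapted to the pair $(m_t,\alpha_t)$ and to the $\alpha$-type cohomology $\widetilde{HL}^\ast(L,L)$. Let $(m_t,\alpha_t)$ be a non-trivial deformation of $(L,[.,.],\alpha)$ and let $(m_n,\alpha_n)$ be its $n$-infinitesimal, that is, the first term after $(m_0,\alpha_0)$ that does not vanish. First I would record that $(m_n,\alpha_n)$ is a $2$-cocycle: extracting the coefficient of $t^n$ from the deformation equations (\ref{deform equ 2}) and (\ref{df alpha 1}) and using that $m_i=0$ and $\alpha_i=0$ for $0<i<n$ leaves only the index patterns with a single entry equal to $n$, which collapse to $\partial_{\gamma \gamma}m_n-\partial_{\alpha \gamma}\alpha_n=0$ and $\partial_{\gamma \alpha}m_n-\partial_{\alpha \alpha}\alpha_n=0$. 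This is exactly the $n$-graded analogue of the computation behind Theorem \ref{cocycle}, so $\partial(m_n,\alpha_n)=0$. If the class of $(m_n,\alpha_n)$ in $\widetilde{HL}^2(L,L)$ is nonzero we are already done, so assume it is a coboundary.

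Next I would remove the leading term by an explicit equivalence. Write $(m_n,\alpha_n)=\partial(\psi_n,0)=(\partial_{\gamma \gamma}\psi_n,\partial_{\gamma \alpha}\psi_n)$ for a $1$-cochain $\psi_n\in\widetilde{CL}^1(L,L)$ (recall the $\alpha$-component of a $1$-cochain is forced to be $0$), and set the formal automorphism $\Psi_t=\mathrm{Id}-\psi_n t^n$ of $L[[t]]$. Transporting $(m_t,\alpha_t)$ along $\Psi_t$ by the rules in (\ref{equivalent}), i.e. $m'_t=\Psi_t^{-1}\circ m_t\circ(\Psi_t\otimes\Psi_t)$ and $\alpha'_t=\Psi_t^{-1}\circ\alpha_t\circ\Psi_t$, yields an equivalent deformation $(m'_t,\alpha'_t)$. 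Since $\Psi_t$ equals the identity below order $n$, comparing coefficients of $t^i$ for $i<n$ gives $m'_i=m_i$ and $\alpha'_i=\alpha_i$, while the order-$t^n$ comparison, which is the computation already performed in the preceding proposition, gives $m'_n-m_n=-\partial_{\gamma \gamma}\psi_n$ and $\alpha'_n-\alpha_n=-\partial_{\gamma \alpha}\psi_n$, the signs being fixed by the chosen form of $\Psi_t$. Hence $(m'_n,\alpha'_n)=(m_n,\alpha_n)-\partial(\psi_n,0)=0$, so the $n$-infinitesimal of $(m'_t,\alpha'_t)$ now lives in degree strictly greater than $n$.

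Then I would iterate. If after finitely many such reductions the new infinitesimal fails to be a coboundary, that equivalent deformation is the one claimed. Otherwise the procedure produces a strictly increasing sequence of leading orders $n<n_1<n_2<\cdots$ and equivalences $\Psi_t^{(k)}=\mathrm{Id}-\psi^{(k)}t^{n_k}$, each killing the current leading term. Because $n_k\to\infty$, the infinite composite $\cdots\circ\Psi_t^{(2)}\circ\Psi_t^{(1)}\circ\Psi_t$ converges in the $t$-adic topology to a genuine formal isomorphism $\Phi_t$ of $L[[t]]$, and by construction $\Phi_t$ transports $(m_t,\alpha_t)$ to the constant deformation $(m_0,\alpha_0)=([.,.],\alpha)$; this makes $(m_t,\alpha_t)$ trivial, contradicting the hypothesis. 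Hence the reduction must stop after finitely many steps, proving the theorem.

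I expect the crux to be the middle paragraph: checking that conjugation by $\Psi_t=\mathrm{Id}-\psi_n t^n$ leaves every coefficient below degree $n$ untouched and subtracts exactly $\partial(\psi_n,0)$ from the degree-$n$ term, simultaneously for the bracket part $m_n$ (via $\partial_{\gamma \gamma}$) and the structure-map part $\alpha_n$ (via $\partial_{\gamma \alpha}$), with matching signs. The only other point requiring care is the $t$-adic convergence of the infinite composition used to rule out a non-terminating sequence of reductions; this is routine once the leading orders are seen to diverge, but it is precisely where the non-triviality hypothesis is used.
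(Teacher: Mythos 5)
Your proposal is correct and follows essentially the same route as the paper: express the leading coboundary as $\partial(\psi_n,0)$, conjugate by a formal isomorphism of the form $\mathrm{Id}\pm\psi_n t^n$ to annihilate the $n$-infinitesimal, and iterate, using non-triviality to force termination. The only difference is cosmetic (your sign conventions, and your explicit remark on $t$-adic convergence of the infinite composite, which the paper leaves implicit).
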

  \begin{proof}
 Let $(m_t,\alpha_t)$ be a deformation of Hom-Leibniz algebra $L$ and $(m_n, \alpha_n)$ be the $n$-infinitesimal of the deformation for some $n\geq 1$. Then by Theorem (\ref{cocycle}), $(m_n, \alpha_n)$ is a $2$-cocycle, that is, $\partial (m_n, \alpha_n)=0$. Suppose $m_n=-\partial_{\gamma \gamma}\phi_n$ and $\alpha_n = -\partial_{\gamma \alpha}\phi_n$for some $\phi_n\in \widetilde{CL}_\gamma^1(L, L)$, that is, $(m_n, \alpha_n)$ is a coboundary. We define a formal isomorphism $\Psi_t$ of $L[[t]]$ as follows:
  $$\Psi_t(a)=a+\phi_n(a)t^n.$$
  We set
  $$\bar{m_t}=\Psi^{-1}_t\circ m_t\circ (\Psi_t\otimes\Psi_t) \,\,\,\text{and}\,\,\,\bar{\alpha_t}=\Psi^{-1}_t\circ\alpha_t\circ\Psi_t$$
  Thus, we have a new deformation $\bar{L_t}$ which is isomorphic to $L_t$. By expanding the above equations and comparing coefficients of $t^n$, we get
  $$\bar{m_n}-m_n=\partial_{\gamma \gamma} \phi_n,~~~\bar{\alpha_n} - \alpha = \partial_{\gamma \alpha}\phi_n.$$
  Hence, $\bar{m_n}=0, \bar{\alpha_n}=0$. By repeating this argument, we can kill off any infinitesimal which is a coboundary. Thus, the process must be stopped if the deformation is non-trivial. 
  \end{proof}
 \begin{cor}
 Let $(L,[.,.],\alpha)$ be a Hom-Leibniz algebra. If $\widetilde{HL}^2 (L, L)=0$ then $L$ is rigid.
 \end{cor}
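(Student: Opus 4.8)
The plan is to show that every one-parameter formal deformation $(m_t,\alpha_t)$ of $(L,[.,.],\alpha)$ is trivial, i.e. equivalent to the constant deformation $(m_0,\alpha_0)=([.,.],\alpha)$. I would argue by an inductive ``killing of infinitesimals'' scheme, using the vanishing of $\widetilde{HL}^2(L,L)$ to guarantee that each obstructing infinitesimal is a coboundary, so that it can be absorbed by a formal change of coordinates.

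First I would dispose of the trivial case: if $(m_t,\alpha_t)$ already equals the constant deformation there is nothing to prove. Otherwise let $(m_n,\alpha_n)$ be its $n$-infinitesimal, that is, the first nonzero term after $(m_0,\alpha_0)$, for some $n\geq 1$. By Theorem~\ref{cocycle} the pair $(m_n,\alpha_n)$ is a $2$-cocycle, $\partial(m_n,\alpha_n)=0$. Since $\widetilde{HL}^2(L,L)=0$, this cocycle is a coboundary, so there is a $1$-cochain $\phi_n\in\widetilde{CL}_\gamma^1(L,L)$ with $m_n=-\partial_{\gamma\gamma}\phi_n$ and $\alpha_n=-\partial_{\gamma\alpha}\phi_n$.

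Next I would invoke exactly the mechanism of the preceding theorem: form the formal isomorphism $\Psi_t(a)=a+\phi_n(a)t^n$ and set $\bar m_t=\Psi_t^{-1}\circ m_t\circ(\Psi_t\otimes\Psi_t)$ and $\bar\alpha_t=\Psi_t^{-1}\circ\alpha_t\circ\Psi_t$. Expanding and comparing coefficients of $t^n$ gives $\bar m_n-m_n=\partial_{\gamma\gamma}\phi_n$ and $\bar\alpha_n-\alpha_n=\partial_{\gamma\alpha}\phi_n$, whence $\bar m_n=0$ and $\bar\alpha_n=0$. Since $\Psi_t=\mathrm{Id}+\phi_n t^n$ differs from the identity only in order $\geq n$, the equivalent deformation $(\bar m_t,\bar\alpha_t)$ agrees with $(m_0,\alpha_0)$ through order $n$, so its first nonzero infinitesimal now sits in strictly higher degree.

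Finally I would iterate this construction. Repeating the step produces a sequence of formal isomorphisms $\Psi_t^{(k)}=\mathrm{Id}+(\text{terms of order}\geq n_k)$ with strictly increasing $n_k\to\infty$, each pushing the first nonzero infinitesimal up one more level. The infinite composite $\Psi_t=\cdots\circ\Psi_t^{(2)}\circ\Psi_t^{(1)}$ converges in the $t$-adic topology precisely because the orders $n_k$ strictly increase, and the limit carries $(m_t,\alpha_t)$ onto the constant deformation; hence $(m_t,\alpha_t)$ is trivial and $L$ is rigid. The one delicate point, and the step I expect to need the most care, is justifying this $t$-adic convergence of the infinite composition and checking that the limiting map is still a well-defined $\mathbb{K}[[t]]$-linear equivalence of deformations; everything else is a direct transcription of Theorem~\ref{cocycle} together with the cohomological interpretation of the infinitesimal established above.
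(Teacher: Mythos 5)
Your proposal is correct and follows essentially the same route as the paper: the paper derives this corollary from the immediately preceding theorem, whose proof is exactly your mechanism of using Theorem~\ref{cocycle} to see the first nonzero infinitesimal is a cocycle, writing it as a coboundary $-\partial\phi_n$ via the vanishing of $\widetilde{HL}^2(L,L)$, conjugating by $\Psi_t=\mathrm{Id}+\phi_n t^n$ to kill it, and iterating. Your explicit attention to the $t$-adic convergence of the infinite composite is a detail the paper compresses into ``by repeating this argument,'' but it is the standard and correct justification.
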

\section{Group action and equivariant cohomology}\label{sec 5}
The notion of a finite group action on a Leibniz algebra was introduced by authors in \cite{MS19}. In this section, we introduce a notion of a finite group action on Hom-Leibniz algebra. We also define an equivariant cohomology of Hom-Leibniz algebra equipped with action of a finite group.
\begin{defn}
Let $G$ be a finite group and $(L,[.,.],\alpha)$ be a Hom-Leibniz algebra. We say group $G$ acts on the Hom-Leibniz algebra $L$ from the left if there is a funtion
$$\Phi:G\times L\to L,$$
satisfying
\begin{enumerate}
\item For each $g\in G$, the map $\psi_g:L\to L,\,x\mapsto gx$ is a $\mathbb{K}$-linear map.
\item $ex=x$, where $e$ denotes identity element of the group $G$.
\item For all $g_1,g_2\in G$ and $x\in L$, $(g_1g_2)x=g_1(g_2x)$.
\item For all $g\in G$ and $x,y\in L$, $g[x,y]=[gx,gy]$ and $\alpha(gx)=g\alpha(x).$
\end{enumerate}
\end{defn}
We may write a Hom-Leibniz algebra $(L,[.,.],\alpha)$  equipped with a finite group action $G$ as $(G,L,[.,.],\alpha)$. 
An alternative way to present the above definition is the following:
\begin{prop}
Let $G$ be a finite group and $(L,[.,.],\alpha)$ be a Hom-Leibniz algebra. The group $G$ acts on $L$ from the left if and only if there is a group homomorphism
\begin{align*}
\Psi:G\to Iso_{Hom\text{-}Leib}(L),\,g\mapsto\psi_g,
\end{align*}
where $Iso_{Hom\text{-}Leib}(L)$ denotes group of ismorphisms of Hom-Leibniz algebras from $L$ to $L$.
\end{prop}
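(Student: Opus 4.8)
The plan is to establish the equivalence by setting up, in each direction, the natural dictionary between a pointwise left action $\Phi\colon G\times L\to L$ and a homomorphism $\Psi\colon G\to Iso_{Hom\text{-}Leib}(L)$, and then checking that the four defining conditions of one formulation correspond exactly to the structural requirements of the other. No deep input is needed; the content is purely in matching axioms.

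First I would treat the forward implication. Assuming $G$ acts on $L$ from the left, I set $\psi_g(x)=gx$ for each $g\in G$. Condition (1) of the definition makes each $\psi_g$ a $\mathbb{K}$-linear map, while condition (4) gives
$$\psi_g([x,y])=g[x,y]=[gx,gy]=[\psi_g(x),\psi_g(y)] \quad\text{and}\quad \psi_g(\alpha(x))=g\alpha(x)=\alpha(gx)=\alpha(\psi_g(x)),$$
so $\psi_g$ is a morphism of Hom-Leibniz algebras. To see that $\psi_g$ lands in $Iso_{Hom\text{-}Leib}(L)$ rather than merely among the endomorphisms, I would use conditions (2) and (3): the computation $\psi_g\circ\psi_{g^{-1}}=\psi_{gg^{-1}}=\psi_e=\mathrm{id}_L$ (and symmetrically on the other side) exhibits $\psi_{g^{-1}}$ as a two-sided inverse. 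Finally, the identity $\psi_{g_1g_2}=\psi_{g_1}\circ\psi_{g_2}$ read off from condition (3), together with $\psi_e=\mathrm{id}_L$ from condition (2), says precisely that $g\mapsto\psi_g$ is a group homomorphism into $Iso_{Hom\text{-}Leib}(L)$.

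For the converse I would reverse the dictionary: given a homomorphism $\Psi$, define $\Phi(g,x)=gx:=\psi_g(x)$. Each $\psi_g$ is by definition an isomorphism of Hom-Leibniz algebras, hence in particular $\mathbb{K}$-linear, which yields condition (1); that $\psi_g$ preserves the bracket and commutes with $\alpha$ yields condition (4). The homomorphism property of $\Psi$ gives $\psi_e=\mathrm{id}_L$, hence $ex=x$ (condition (2)), and $\psi_{g_1g_2}=\psi_{g_1}\circ\psi_{g_2}$, hence $(g_1g_2)x=g_1(g_2x)$ (condition (3)).

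The computations are entirely formal, and the only point requiring a little care --- the sole place where the \emph{group} structure is genuinely used rather than just preservation of the operations --- is verifying in the forward direction that each $\psi_g$ is invertible, so that $\Psi$ truly takes values in the isomorphism group. This is exactly where the existence of $g^{-1}$ together with the identity and associativity axioms (2)--(3) enter; everything else is a direct transcription.
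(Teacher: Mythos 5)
Your proof is correct and complete; it is the standard dictionary between a left action and a homomorphism into the automorphism group, with the invertibility of each $\psi_g$ correctly extracted from the identity and associativity axioms. The paper states this proposition without any proof (it is offered only as a reformulation of the definition), so your argument simply supplies the routine verification the authors leave implicit.
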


Let $M,M\rq$ be Hom-Leibniz algebras equipped with actions of group $G$. We say a $\mathbb{K}$-linear map $f:M\to M\rq$ is equivariant  if for all $g\in G$ and $x\in M$, $f(gx)=gf(x)$. We write the set of all equivariant maps from $M$ to $M\rq$ as $\text{Hom}^G_\mathbb{K}(M,M\rq)$.

A $G$-Hom-vector space is a Hom-vector space $(M,\beta)$ together with a action of $G$ on $M$, and $\beta:M\to M$ is an equivariant map.
We denote an equivariant Hom-vector space as triple $(G,M,\beta)$.

\begin{exam}
Any $G$-Hom-vector space $(G,M,\beta)$ together with the trivial bracket (i.e. $[x,y]=0$ for all $x,y\in M$) is a Hom-Leibniz algebra equipped with an action of $G$.
\begin{exam}\label{example-3}
Let $V$ be $\mathbb K$-module which is a representation space of a finite group $G.$ On 
$$\bar{T}(V) = V\oplus V^{\otimes 2}\oplus \cdots \oplus V^{\otimes n}\oplus \cdots $$ there is a unique bracket that makes it into a Hom-Leibniz algebra by taking $\alpha=\text{Id}$ and verifies 
$$v_1 \otimes v_2 \otimes  \cdots \otimes v_n = [\cdots [[v_1,v_2],v_3],\cdots ,v_n]~\mbox{for}~v_i\in V~\mbox{and}~i=1,\ldots,n.$$ This is the free Hom-Leibniz algebra over the $\mathbb{K}$-module $V$. The linear action of $G$ on $V$ extends naturally to an action on $\bar{T}(V)$.
\end{exam}   
\end{exam}
\begin{defn}
 Let $(G,L,[.,.],\alpha)$ be a Hom-Leibniz algebra equipped with an action of a finite group $G$. A  $G$-bimodule over $L$ is a $G$-Hom-vector space  $(G,M,\beta)$ together with two $L$-actions (left and right multiplications), $m_l:L\otimes M\to M $ and $m_r:M\otimes L\to M $ such that $m_l,m_r$ satisfying the following conditions:
  \begin{align*}
  & m_l(gx, gm) = g m_l(x, m),\\
  & m_r(gm, gx) =g m_r(m,x),\\
 &\beta (m_l (x, m)) = m_l(\alpha(x), \beta(m)),\\
 &\beta (m_r (m, x)) = m_r(\beta(m), \alpha(x)),\\
 &m_r (\beta(m), [x,y]) = m_r(m_r(m, x), \alpha(y)) - m_r (m_r(m, y), \alpha(x)),\\
 & m_l (\alpha(x), m_r(m, y))=  m_r(m_l(x, m), \alpha (y))- m_l([x, y], \beta(m)),\\
 & m_l(\alpha(x), m_l(y, m)) = m_l([x, y], \beta(m)) - m_r(m_l(x, m), y),
 \end{align*}
 for any $x, y \in L, m\in M$, and $g \in G$.
 \end{defn}
 \begin{remark}
 Any Hom-Leibniz algebra equipped with an action of a finite group $G$ is a G-bimodule over itself. In this paper, we shall only consider G-bimodule over itself.
 \end{remark}
We now introduce an equivariant cohomology of Hom-Leibniz algebras  $L$ equipped with an action of a finite group $G$.

Set 
\begin{align*}
&\widetilde{CL}^n_{G}(L,L)\\
&:=\lbrace (c_\gamma, c_\alpha) \in \widetilde{CL}^n(L, L) : c_\gamma(\psi_g(x_1),\ldots,\psi_g(x_n))=gc_\gamma(x_1,\ldots,x_n),\\
&~ c_\alpha(\psi_g(x_1),\ldots,\psi_g(x_{n-1}))=gc_\alpha(x_1,\ldots,x_{n-1})\rbrace\\
&=\lbrace (c_\gamma, c_\alpha) \in \widetilde{CL}^n(L, L) : c_\gamma(gx_1,\ldots,gx_n)=gc_\gamma(x_1,\ldots,x_n),\\
&~ c_\alpha(gx_1,\ldots,gx_{n-1})=gc_\alpha(x_1,\ldots,x_{n-1}) \rbrace.
\end{align*}
Here $\widetilde{CL}^n(L, L)$ is $n$-cochain group of the Hom-Leibniz algebra $(L,[.,.],\alpha)$ and $\widetilde{CL}^n_{G}(L,L)$ consists of all $n$-cochains which are equivariant. Clearly, $\widetilde{CL}^n_{G}(L,L)$ is a submodule of $\widetilde{CL}^n(L, L)$ and $(c_\gamma, c_\alpha) \in \widetilde{CL}^n_{G}(L,L)$ is called an invariant $n$-cochain. 
\begin{lemma}
If a $n$-cochain $(c_\gamma, c_\alpha)$ is invariant then $\partial(c_\gamma, c_\alpha)$ is also an invariant $(n+1)$-cochain. In otherwords,
$$(c_\gamma, c_\alpha) \in \widetilde{CL}^n_{G}(L,L)\implies \partial(c_\gamma, c_\alpha)\in \widetilde{CL}^{n+1}_{G}(L,L).$$
\end{lemma}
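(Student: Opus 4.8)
The plan is to reduce the lemma to the statement that each of the four operators $\partial_{\gamma \gamma}$, $\partial_{\alpha \gamma}$, $\partial_{\gamma \alpha}$, $\partial_{\alpha \alpha}$ carries an equivariant cochain to an equivariant cochain. Since $\partial(c_\gamma, c_\alpha) = (\partial_{\gamma \gamma} c_\gamma - \partial_{\alpha \gamma} c_\alpha,\ \partial_{\gamma \alpha} c_\gamma - \partial_{\alpha \alpha} c_\alpha)$ and the invariant cochains form a $\mathbb{K}$-submodule of $\widetilde{CL}^{n+1}(L,L)$, the conclusion then follows at once by forming the relevant $\mathbb{K}$-linear combinations of equivariant maps.

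The single fact driving every computation is that $G$ acts by Hom-Leibniz automorphisms: for all $g \in G$ and $x,y \in L$ one has $g[x,y] = [gx,gy]$ and $g\alpha(x) = \alpha(gx)$, whence $g\alpha^k(x) = \alpha^k(gx)$ for every $k \ge 0$ by iteration. I would verify the four operators in turn. For $\partial_{\gamma \gamma}$ (and identically for $\partial_{\alpha \alpha}$, whose defining formula has the same shape with one fewer argument), I replace each $x_i$ by $gx_i$ in the formula and push $g$ past every outer bracket, every power $\alpha^{n-1}$, and every internal bracket $[x_i,x_j]$ using the two compatibilities above; equivariance of the cochain $\phi$ then factors $g$ out of each summand, yielding $(\partial_{\gamma \gamma}\phi)(gx_1,\ldots,gx_{n+1}) = g\,(\partial_{\gamma \gamma}\phi)(x_1,\ldots,x_{n+1})$. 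The operator $\partial_{\gamma \alpha}$ is immediate, since $(\partial_{\gamma \alpha}\phi)(x_1,\ldots,x_n) = \alpha(\phi(x_1,\ldots,x_n)) - \phi(\alpha(x_1),\ldots,\alpha(x_n))$ and both terms visibly commute with $g$. Finally $\partial_{\alpha \gamma}$ is handled the same way, the only new ingredient being that the coefficient brackets satisfy $[\alpha^{n-2}(gx_i),\alpha^{n-2}(gx_j)] = g\,[\alpha^{n-2}(x_i),\alpha^{n-2}(x_j)]$, again by the two compatibilities.

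There is no genuine obstacle: the argument is term-by-term bookkeeping in which the hypothesis that $G$ respects both the bracket and the structure map is exactly what is needed at each occurrence of an automorphism. The only point deserving a line of care is confirming that $g$ commutes with the iterated structure maps $\alpha^{n-1}$ and $\alpha^{n-2}$ that appear in $\partial_{\gamma \gamma}$, $\partial_{\alpha \alpha}$ and $\partial_{\alpha \gamma}$; this follows by induction from $g\alpha = \alpha g$. Once this is recorded, each of the four verifications is routine and the invariance of $\partial(c_\gamma, c_\alpha)$ follows.
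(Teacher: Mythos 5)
Your proposal is correct and follows essentially the same route as the paper: reduce to showing each of the four component operators $\partial_{\gamma\gamma}$, $\partial_{\gamma\alpha}$, $\partial_{\alpha\alpha}$, $\partial_{\alpha\gamma}$ respects the $G$-action, then push $g$ through the brackets and the iterated structure maps using $g[x,y]=[gx,gy]$ and $g\alpha=\alpha g$. The paper carries out the explicit term-by-term check for $\partial_{\gamma\gamma}$ and $\partial_{\gamma\alpha}$ and declares the other two similar, exactly as you outline.
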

\begin{proof}
As $(c_\gamma, c_\alpha) \in \widetilde{CL}^n_{G}(L,L)$, we have 
\begin{align*}
& c_\gamma(gx_1,gx_2,\ldots,gx_{n})=gc_\gamma(x_1,x_2,\ldots,x_{n}),\\
& c_\alpha(gx_1,gx_2,\ldots,gx_{n-1})=gc_\alpha(x_1,x_2,\ldots,x_{n-1}),
\end{align*}
 for all $g\in G$ and $x_1,\ldots,x_{n+1}\in L$. It is enough to show that the four differentials $\partial_{\gamma \gamma}, \partial_{\gamma \alpha}, \partial_{\alpha \alpha}, \partial_{\alpha \gamma}$ respect the group action.
Observe that 
\begin{align*}
&\partial_{\gamma \gamma}(c_\gamma)(\psi_g(x_1),\psi_g(x_2),\ldots,\psi_g(x_{n+1}))\\
&=\partial_{\gamma \gamma}(c_\gamma)(gx_1,gx_2,\ldots,gx_{n+1})\\
                                                  &=[\alpha^{n-1}(gx_1), c_\gamma(gx_2,\ldots,gx_n)]+\sum^{n+1}_{i=2}(-1)^{i}[c_\gamma(gx_1,\cdots,\hat{gx_i},\ldots,gx_{n+1}),\alpha^{n-1}g(x_i)]\\
&+\sum_{1\leq i<j\leq n+1}(-1)^{j+1} c_\gamma(\alpha(gx_1),\ldots,\alpha(gx_{i-1}),[gx_i,gx_j],\alpha(gx_{i+1}),\ldots,\widehat{\alpha(gx_j)},\ldots,\alpha(gx_{n+1})).\\
&=[g\alpha^{n-1}(x_1),g c_\gamma(x_2,\ldots,x_{n+1})]+\sum^{n+1}_{i=2}(-1)^{i}[gc_\gamma(x_1,\cdots,\hat{x_i},\ldots,x_n),g\alpha^{n-1}(x_i)]\\
&+\sum_{1\leq i<j\leq n+1}(-1)^{j+1}gc_\gamma(\alpha(x_1),\ldots,\alpha(x_{i-1}),[x_i,x_j],\alpha(x_{i+1}),\ldots,\widehat{\alpha(gx_j)},\ldots,\alpha(x_{n+1})).\\
&=g\partial_{\gamma \gamma}(c_\gamma)(x_1,x_2,\ldots,x_{n+1}).
\end{align*}
On the other hand, we have
\begin{align*}
&(\partial_{\gamma \alpha} c_\gamma) (gx_1,\ldots, gx_{n})\\
&= \alpha( c_\gamma(gx_1,\ldots, gx_{n}) ) - c_\gamma(\alpha(gx_1),\ldots,\alpha( gx_{n})) \\
                                                                                                         &= g \big(\alpha( c_\gamma(x_1,\ldots, x_{n}) ) - c_\gamma(\alpha(x_1),\ldots,\alpha( x_{n}))\big)\\
                                                                                                         &=g(\partial_{\gamma \alpha} c_\gamma) (x_1,\ldots, x_{n}).
\end{align*}
Similarly, it is easy to show that 
\begin{align*}
&(\partial_{\alpha \alpha} c_\alpha) (gx_1,\ldots, gx_{n})= g(\partial_{\alpha \alpha} c_\alpha) (x_1,\ldots, x_{n}),\\
&(\partial_{\alpha \gamma} c_\alpha) (gx_1,\ldots, gx_{n+1})= g(\partial_{\alpha \gamma} c_\alpha) (x_1,\ldots, x_{n+1}).
\end{align*}
Thus, $\partial(c_\gamma, c_\alpha)\in \widetilde{CL}^{n+1}_{G}(L,L)$.
\end{proof}

The cochain complex $\lbrace \widetilde{CL}^\ast_{G}(L,L),\partial\rbrace $ is called an equivariant cochain complex of $(G,L,[.,.],\alpha)$. We define $n$th equivariant cohomology group of $(G,L,[.,.],\alpha)$ with coefficients over itself by
$$\widetilde{HL}_G^n (L, L):=H_n(\widetilde{CL}^\ast_{G}(L,L)).$$

\section{Equivariant formal deformation of Hom-Leibniz algebra structure}\label{formal defn}\label{sec 6}
In this section, we introduce an equivariant one-parameter formal deformation theory including the deformation of the structure map of Hom-Leibniz algebra equipped with action of a finite group $G$. We show that equivariant cohomology controls such equivariant deformations.
\begin{defn}
An equivariant one-parameter formal deformation of $(G,L,[.,.],\alpha)$ is given by $\mathbb{K}[[t]]$-bilinear and a $\mathbb{K}[[t]]$-linear map $m_t:L[[t]]\times L[[t]]\to L[[t]]$ and $\alpha_t:L[[t]]\to L[[t]]$ respectively of the forms
$$m_t=\sum_{i\geq 0}m_it^i\,\,\text{and}\,\,\alpha_t=\sum_{i\geq 0}\alpha_it^i,$$
where each $m_i:L\times L\to L$ is a $\mathbb{K}$-bilinear map and each $\alpha_i:L\to L$ is a $\mathbb{K}$-linear map satisfying the followings:
\begin{enumerate}
\item $m_0(x,y)=[x,y]$ is the original Hom-Leibniz bracket on $L$ and $\alpha_0(x,y)=\alpha(x,y)$.
\item $m_t$ and $\alpha_t$ satisfies the following Hom-Leibniz algebra condition:\label{equ defor}$$
m_t(\alpha_t(x),m_t(y,z))=m_t(m_t(x,y),\alpha_t(z))-m_t(m_t(x,z),\alpha_t(y)).
$$
\item The map $\alpha_t$ is multiplicative, that is, $m_t (\alpha_t(x), \alpha_t(y)) = \alpha_t (m_t(x, y))$.\label{equ defor mult}
\item For all $g\in G$,~$x,y\in L$ and $i\geq 0$,$$m_i(gx,gy)=gm_i(x,y)\,\,\text{and}\,\,\alpha_i(gx)=g\alpha_i(x),$$
that is, $m_i\in\text{Hom}^G_\mathbb{K}(L\otimes L, L)$ and $\alpha_i\in\text{Hom}^G_\mathbb{K}(L,L).$
\end{enumerate}
\end{defn}
For all $n\geq 0$, the Condition (\ref{equ defor}) in the above Definition is equivalent to
\begin{align}
  \label{equ deform equ 11}\sum_{\substack{i+j+k=n\\i,j,k\geq 0}}m_i(\alpha_j(x),m_k(y,z))-m_i(m_k(x,y),\alpha_j(z))+m_i(m_k(x,z),\alpha_j(y))=0.
 \end{align}
 For all $n\geq 0$, the Condition (\ref{equ defor mult}) in the above Definition is equivalent to
 \begin{align}
\label{equ deform equ 12} \sum_{\substack{i+j+k=n\\i,j,k\geq 0}}m_i(\alpha_j(x), \alpha_k(y))- \sum_{\substack{i+j=n\\i,j\geq 0}}\alpha_i(m_j(x,y))=0.
 \end{align}
 \begin{defn}
An equivariant $2$-cochain $(m_1, \alpha_1)$ is called an equivariant infinitesimal of the equivariant deformation $(m_t, \alpha_t)$. Suppose more  generally that $(m_n, \alpha_n)$ is the first non-zero term of $(m_t, \alpha_t)$ after $(m_0, \alpha_0)$, such $(m_n, \alpha_n)$ is called an equivariant $n$-infinitesimal of the equivariant deformation.
\end{defn}
\begin{prop}\label{equ cocycle}
Let $G$ be a finite group and $(L,[.,.],\alpha)$ be a Hom-Leibniz algebra. Suppose $(G,L_t,m_t,\alpha_t)$ is its equivariant one-parameter deformation then the equivariant infinitesimal of an equivariant deformation is a $2$-cocycle of the equivariant Hom-Leibniz cohomology.
\end{prop}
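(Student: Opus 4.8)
The plan is to establish the two defining properties of a $2$-cocycle in the equivariant cochain complex $\widetilde{CL}^*_G(L, L)$ separately: first that the equivariant infinitesimal actually lies in $\widetilde{CL}^2_G(L, L)$, and second that its coboundary vanishes. The point is that the cocycle identity is inherited unchanged from the non-equivariant theory, while the extra input---equivariance---is built directly into the definition.

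First I would observe that the pair $(m_1, \alpha_1)$ is an equivariant cochain. Indeed, the equivariance requirement in the definition of an equivariant formal deformation asserts precisely that each $m_i$ and $\alpha_i$ are equivariant, that is $m_i(gx, gy) = g\,m_i(x, y)$ and $\alpha_i(gx) = g\,\alpha_i(x)$ for all $g \in G$ and $x, y \in L$. Taking $i = 1$ shows $m_1 \in \text{Hom}^G_\mathbb{K}(L \otimes L, L)$ and $\alpha_1 \in \text{Hom}^G_\mathbb{K}(L, L)$, so $(m_1, \alpha_1) \in \widetilde{CL}^2_G(L, L)$. The same remark applies to any $n$-infinitesimal $(m_n, \alpha_n)$.

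Next I would extract the cocycle condition from the deformation equations. Comparing the coefficients of $t^1$ in the Hom-Leibniz identity (\ref{equ deform equ 11}) and in the multiplicativity condition (\ref{equ deform equ 12}) yields exactly the two relations obtained in Section \ref{sec 4} for the non-equivariant case, namely $\partial_{\gamma\gamma} m_1 - \partial_{\alpha\gamma}\alpha_1 = 0$ and $\partial_{\alpha\alpha}\alpha_1 - \partial_{\gamma\alpha} m_1 = 0$, which together say $\partial(m_1, \alpha_1) = 0$. This is the content of Theorem \ref{cocycle} applied to the underlying one-parameter deformation. The identical computation works verbatim for a general $n$-infinitesimal: since $m_1 = \cdots = m_{n-1} = 0$ and $\alpha_1 = \cdots = \alpha_{n-1} = 0$, the order-$n$ equations collapse to $m_0 \circ_{\alpha_0} m_n + m_n \circ_{\alpha_0} m_0 + m_0 \circ_{\alpha_n} m_0 = 0$ together with its multiplicative analogue, giving $\partial(m_n, \alpha_n) = 0$.

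Finally I would combine the two facts using the Lemma above, which shows that $\partial$ carries $\widetilde{CL}^n_G(L, L)$ into $\widetilde{CL}^{n+1}_G(L, L)$, so that $\widetilde{CL}^*_G(L, L)$ is genuinely a subcomplex of $\widetilde{CL}^*(L, L)$. Since $(m_1, \alpha_1)$ lies in this subcomplex and has vanishing coboundary in the ambient complex, it is a $2$-cocycle of the equivariant complex, as claimed. I do not anticipate any genuine obstacle here: the cocycle identity is purely formal and identical to the non-equivariant case, and the only new ingredient is the equivariance of each $m_i, \alpha_i$, which is an assumption rather than something to be proved.
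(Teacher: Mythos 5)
Your proposal is correct and follows essentially the same route as the paper: equivariance of $(m_n,\alpha_n)$ is immediate from the definition of an equivariant deformation, and the identities $\partial_{\gamma\gamma}m_n-\partial_{\alpha\gamma}\alpha_n=0$ and $\partial_{\alpha\alpha}\alpha_n-\partial_{\gamma\alpha}m_n=0$ are extracted from the order-$n$ coefficients of the deformation equations exactly as in the non-equivariant Theorem \ref{cocycle}. Your explicit appeal to the lemma that $\partial$ preserves $\widetilde{CL}^*_G(L,L)$ is a small but welcome addition of care that the paper leaves implicit.
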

\begin{proof}
Let $(m_n, \alpha_n)$ be an equivariant $n$-infinitesimal of an equivariant deformation $(m_t, \alpha_t)$. Thus, $m_i=\alpha_i=0$ for all $0<i<n$ and $m_i(gx,gy)=gm_i(x,y),~ \alpha_i(gx)=g\alpha_i(x)$. From the Equation (\ref{equ deform equ 11}), we have
\begin{align*}
&[\alpha(x),m_n(y,z)]-[m_n(x,y),\alpha(z)]+[m_n(x,z),\alpha(y)]+m_n(\alpha(x),[y,z])\\
&-m_n([x,y],\alpha(z))+m_n([x,z],\alpha(y))+ [\alpha_n(x), [y,z]] - [[x,y], \alpha_n(z)] + [[x,z], \alpha_n(y)]\\
&=( \partial_{\gamma \gamma} m_n  -  \partial_{\alpha \gamma} \alpha_n ) (x, y, z)=0.
\end{align*}
From the Equation (\ref{equ deform equ 12}), we have
\begin{align*}
&[\alpha(x), \alpha_n(y)] + [\alpha_n(x), \alpha(y)] + m_n(\alpha(x), \alpha(y)) - \alpha(m_n(x,y)) - \alpha_n [x, y] \\
&= (\partial_{\alpha \alpha} \alpha_1 - \partial_{\gamma \alpha} m_1)(x, y)\\
&= 0.
\end{align*}

This is same as $\partial^2 (m_n, \alpha_n)=0$. Thus, the desired result follows.
\end{proof}

An equivariant $n$-deformation of a Hom-Leibniz algebra equipped with a finite group action is a formal deformation of the forms
$$m_t=\sum^n_{i=0}m_it^i,~~~ \alpha_t=\sum^n_{i=0}\alpha_it^i,$$
such that 
\begin{enumerate}
\item For each $0\leq i\leq n$, $m_i\in \text{Hom}^G_\mathbb{K}(L\otimes L,L)$ and $\alpha_i\in\text{Hom}^G_\mathbb{K}(L,L)$, that is, each $m_i$ and $\alpha_i$ are equivariant $\mathbb{K}$-linear maps.
\item $m_t$ satisfies the Hom-Leibniz identity, that is,

$m_t(\alpha_t(x),m_t(y,z))=m_t(m_t(x,y),\alpha_t(z))-m_t(m_t(x,z),\alpha_t(y)).$\label{equ n-deform}
\item The map $\alpha_t$ is multiplicative, that is, $m_t (\alpha_t(x), \alpha_t(y)) = \alpha_t (m_t(x, y))$.
\end{enumerate}

We say an equivariant $n$-deformation $(m_t, \alpha_t)$ of a Hom-Leibniz algebra $(G,L,[.,.],\alpha)$ is extendable to an equivariant $(n+1)$-deformation if there is an element $(m_{n+1}, \alpha_{n+1}) \in \widetilde{CL}^{n+1}_{G}(L,L)$ such that
\begin{align*}
&\bar{m_t}=m_t+m_{n+1}t^{n+1},\\
&\bar{\alpha_t}=\alpha_t+\alpha_{n+1}t^{n+1},
\end{align*}
and $(\bar{m_t}, \bar{\alpha_t})$ satisfies all the conditions of formal deformations.

For $n\geq -1$, we can rewrite the Equation (\ref{equ deform equ 11}) in the following form using Hom-Leibniz cohomology
\begin{align}
\label{eq obs deform equ 1} \sum_{\substack{i+j+k=n+1\\i,j,k\geq 0}}m_i(\alpha_j(x),m_k(y,z))-m_i(m_k(x,y),\alpha_j(z))+m_i(m_k(x,z),\alpha_j(y))=0.
 \end{align}
 \begin{align}
\label{eq obs deform equ 12} \sum_{\substack{i+j+k=n+1\\i,j,k\geq 0}}m_i(\alpha_j(x), \alpha_k(y))- \sum_{\substack{i+j=n+1\\i,j\geq 0}}\alpha_i(m_j(x,y))=0.
 \end{align}
 This is same as the following equations
 \begin{align*}
&( \partial_{\gamma \gamma} m_{n+1}  -  \partial_{\alpha \gamma} \alpha_{n+1} ) (x, y, z) \\
&=-\sum_{\substack{i+j+k=n+1\\i,j,k>0}}m_i(\alpha_j(x),m_k(y,z))-m_i(m_k(x,y),\alpha_j(z))+m_i(m_k(x,z),\alpha_j(y))\\
&=-\sum_{\substack{i+j+k=n+1\\i,j,k>0}}m_i\circ_{\alpha_j}m_k (x, y, z).
\end{align*}
\begin{align*}
 (\partial_{\alpha \alpha} \alpha_{n+1} - \partial_{\gamma \alpha} m_{n+1}) (x, y) = - \sum_{\substack{i + j + k= n+1\\ i, j, k > 0}} m_i (\alpha_j (x), \alpha_k(y)) + \sum_{\substack{i + j = n+1\\ i, j > 0}} \alpha_i (m_j (x, y)).
\end{align*}
We define $n$th obstruction to extend a deformation of Hom-Leibniz algebra of order $n$ to order $n+1$  as  $\text{Obs}_G^n = (\text{Obs}^n_{G,\gamma}, \text{Obs}^n_{G,\alpha})$, where
\begin{align}
\label{eq obs equ 222}&\text{Obs}^n_{G,\gamma}(x, y, z):=\sum_{\substack{i+j+k=n+1\\i,j,k>0}}m_i\circ_{\alpha_j}m_k (x, y, z)=(\partial_{\gamma \gamma} m_{n+1}  -  \partial_{\alpha \gamma} \alpha_{n+1})(x, y, z),\\
&\text{Obs}^n_{G,\alpha}(x, y) \\ \nonumber
& :=- \sum_{\substack{i + j + k= n+1\\ i, j, k > 0}} m_i (\alpha_j (x), \alpha_k(y)) + \sum_{\substack{i + j = n+1\\ i, j > 0}} \alpha_i (m_j (x, y)) \\ \nonumber
&= (\partial_{\alpha \alpha} \alpha_{n+1} - \partial_{\gamma \alpha} m_{n+1})(x, y).
\end{align}
\begin{lemma}
Suppose $(m_t, \alpha_t)$ is an equivariant $n$-deformations, then $\text{Obs}_G^n\in \widetilde{CL}^{3}_{G}(L,L)$ is a cocycle for all $n\geq 1$.
\end{lemma}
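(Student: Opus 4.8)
The plan is to verify two claims in turn: that $\text{Obs}_G^n$ is an equivariant cochain, and that it is closed under $\partial$.

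First I would settle membership in $\widetilde{CL}^3_G(L,L)$. By the last condition in the definition of an equivariant $n$-deformation, every $m_i$ and every $\alpha_i$ is $G$-equivariant, and the $G$-action is by Hom-Leibniz automorphisms, so $g[x,y]=[gx,gy]$ and $g\alpha(x)=\alpha(gx)$. Both components of the obstruction are assembled purely from composites of these maps: $\text{Obs}^n_{G,\gamma}$ from the $\alpha_j$-associators $m_i\circ_{\alpha_j}m_k$, and $\text{Obs}^n_{G,\alpha}$ from the composites $m_i(\alpha_j(-),\alpha_k(-))$ and $\alpha_i(m_j(-,-))$. Since a composite of equivariant maps is equivariant, both components are equivariant, which gives $\text{Obs}_G^n\in\widetilde{CL}^3_G(L,L)$. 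This part is formal and uses nothing beyond closure of equivariance under composition.

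The substantive claim is $\partial(\text{Obs}_G^n)=0$, equivalently the two identities $\partial_{\gamma\gamma}\text{Obs}^n_{G,\gamma}=\partial_{\alpha\gamma}\text{Obs}^n_{G,\alpha}$ and $\partial_{\gamma\alpha}\text{Obs}^n_{G,\gamma}=\partial_{\alpha\alpha}\text{Obs}^n_{G,\alpha}$. Here equivariance plays no role, so I would run exactly the non-equivariant obstruction computation. For the purely bracket-theoretic contribution I would exploit the graded Lie algebra of Section \ref{sec 2}: writing the relevant associators through the circle product and using the identity $\delta\phi=-[\phi,m_0]$ together with the graded derivation formula of Lemma \ref{differential}, the $\partial_{\gamma\gamma}$-part of the obstruction is closed as a consequence of the graded Jacobi identity once the lower-order relations (\ref{equ deform equ 11}) are imposed. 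The remaining contributions, involving the structure-map deformations $\alpha_i$ and the off-diagonal maps $\partial_{\gamma\alpha},\partial_{\alpha\gamma}$, I would handle by a direct regrouping of the multilinear terms, repeatedly substituting the order $\le n$ Hom-Leibniz identity (\ref{equ deform equ 11}) and multiplicativity identity (\ref{equ deform equ 12}), which hold because $(m_t,\alpha_t)$ is an $n$-deformation; this mirrors the computation indicated in Section \ref{sec 4}.

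The hard part will be the bookkeeping of the cross terms that couple the bracket data $m_i$ to the structure-map data $\alpha_j$. In the classical setting of a single associative or Lie product the cocycle property of the obstruction is a one-line consequence of the graded Jacobi identity for one differential graded Lie algebra; here $\partial$ has four components and the two halves $\text{Obs}^n_{G,\gamma}$ and $\text{Obs}^n_{G,\alpha}$ are entangled through $\partial_{\alpha\gamma}$ and $\partial_{\gamma\alpha}$. Establishing that these mixed contributions cancel, which is precisely where the multiplicativity equation (\ref{equ deform equ 12}) must be fed in alongside (\ref{equ deform equ 11}), is the genuinely new and most delicate step, and it is the analogue for the present $\alpha$-type theory of the cocycle argument already used for the infinitesimal in Proposition \ref{equ cocycle}.
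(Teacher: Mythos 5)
Your first step, the membership claim $\text{Obs}_G^n\in\widetilde{CL}^3_G(L,L)$, matches the paper exactly: the paper checks $\text{Obs}^n_{G,\gamma}(gx,gy,gz)=g\,\text{Obs}^n_{G,\gamma}(x,y,z)$ and the analogous identity for $\text{Obs}^n_{G,\alpha}$ by pushing $g$ through the equivariant maps $m_i$, $\alpha_i$, which is the same ``composites of equivariant maps are equivariant'' argument you give. Since the equivariant complex is a subcomplex of $\widetilde{CL}^\ast(L,L)$, closedness there reduces to closedness in the full complex, as you say.

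On the closedness claim itself you part ways with the paper, and the comparison cuts both ways. The paper disposes of it in one line: since $\text{Obs}_G^n=\partial(m_{n+1},\alpha_{n+1})$, it is a coboundary, hence a cocycle. That argument presupposes the existence of the pair $(m_{n+1},\alpha_{n+1})$, i.e.\ that the deformation extends --- precisely what the obstruction is meant to decide --- so it does not establish the lemma for a non-extendable $n$-deformation. Your plan is the logically correct one: $\partial(\text{Obs}_G^n)=0$ must be derived from the order $\le n$ identities (\ref{equ deform equ 11}) and (\ref{equ deform equ 12}) alone. But you stop at the plan. The graded Lie calculus of Section \ref{sec 2} only governs the $\partial_{\gamma\gamma}$-component, and the cancellation of the cross terms coupling the $m_i$ to the $\alpha_j$ through $\partial_{\alpha\gamma}$ and $\partial_{\gamma\alpha}$ --- which you yourself flag as the delicate step --- is never carried out. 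To close the gap you would need to expand $\partial_{\gamma\gamma}\text{Obs}^n_{G,\gamma}-\partial_{\alpha\gamma}\text{Obs}^n_{G,\alpha}$ and $\partial_{\gamma\alpha}\text{Obs}^n_{G,\gamma}-\partial_{\alpha\alpha}\text{Obs}^n_{G,\alpha}$ explicitly and exhibit the cancellations, following the pattern of \cite{HM19} in the Hom-associative case. As written, your argument for closedness is a programme rather than a proof, though it aims at a statement the paper's own shortcut does not actually prove.
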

\begin{proof}
As for all $i\geq 0$, $m_i\in \text{Hom}^G_\mathbb{K}(L\otimes L,L)$ and $\alpha_i\in \text{Hom}^G_\mathbb{K}(L,L)$. So for all $x,y\in L$, $m_i(gx,gy)=gm_i(x,y)$ and $\alpha_i(gx)=g\alpha_i(x)$.
Now,
\begin{align*}
&\text{Obs}^n_{G,\gamma}(gx,gy,gz)
=-\sum_{\substack{i+j+k=n+1\\i,j,k>0}}m_i(\alpha_j(gx),m_k(gy,gz))\\
&-m_i(m_k(gx,gy),\alpha_j(gz))+m_i(m_k(gx,gz),\alpha_j(gy))\\
&=-g\sum_{\substack{i+j+k=n+1\\i,j,k>0}}m_i(\alpha_j(x),m_k(y,z))-m_i(m_k(x,y),\alpha_j(z))+m_i(m_k(x,z),\alpha_j(y))\\
&=g\text{Obs}^n_{G,\gamma}(x,y,z).
\end{align*}
\begin{align*}
\text{Obs}^n_{G,\alpha}(gx, gy)& =- \sum_{\substack{i + j + k= n+1\\ i, j, k > 0}} m_i (\alpha_j (gx), \alpha_k(gy)) + \sum_{\substack{i + j = n+1\\ i, j > 0}} \alpha_i (m_j (gx, gy))\\
                                                     &= g\big(\sum_{\substack{i + j + k= n+1\\ i, j, k > 0}} m_i (\alpha_j (x), \alpha_k(y)) + \sum_{\substack{i + j = n+1\\ i, j > 0}} \alpha_i (m_j (x, y))\big)\\
                                                     &= g\text{Obs}^n_{G,\alpha}(x, y).
\end{align*}
Thus, $\text{Obs}_G^n\in \widetilde{CL}^{3}_{G}(L,L).$
As $\text{Obs}_G^n(x,y,z)=\partial (m_{n+1},\alpha_{n+1})(x,y,z)$, we have $\text{Obs}_G^n$ is an equivariant cocycle.
\end{proof}
We can prove the following theorem along the same line as of non-equivariant case.
\begin{theorem}
An equivariant $n$-deformation extends to an equivariant $(n+1)$-deformation if and only if cohomology class of  $\text{Obs}_G^n$ vanishes.
\end{theorem}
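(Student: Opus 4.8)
The plan is to mirror the proof of the non-equivariant obstruction theorem, the only genuinely new ingredient being that every cochain produced along the way must be checked to lie in the equivariant subcomplex $\widetilde{CL}^\ast_G(L,L)$ rather than merely in $\widetilde{CL}^\ast(L,L)$.

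First I would treat the forward implication. Assume the equivariant $n$-deformation $(m_t,\alpha_t)$ extends to an equivariant $(n+1)$-deformation. By the definition of extendability there are cochains $m_{n+1}\in\text{Hom}^G_\mathbb{K}(L\otimes L,L)$ and $\alpha_{n+1}\in\text{Hom}^G_\mathbb{K}(L,L)$ for which $\bar{m}_t=m_t+m_{n+1}t^{n+1}$ and $\bar{\alpha}_t=\alpha_t+\alpha_{n+1}t^{n+1}$ satisfy the Hom-Leibniz identity and the multiplicativity condition through order $n+1$. Comparing the coefficient of $t^{n+1}$ in these two identities yields precisely Equation (\ref{eq obs equ 222}), i.e. $\text{Obs}_G^n=(\text{Obs}^n_{G,\gamma},\text{Obs}^n_{G,\alpha})=\partial(m_{n+1},\alpha_{n+1})$. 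Since $(m_{n+1},\alpha_{n+1})\in\widetilde{CL}^2_G(L,L)$ and $\partial$ preserves the equivariant subcomplex, $\text{Obs}_G^n$ is an equivariant coboundary, so its class in $\widetilde{HL}^3_G(L,L)$ vanishes.

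Next I would treat the converse. By the preceding lemma $\text{Obs}_G^n$ is an equivariant $3$-cocycle, so it determines a class in $\widetilde{HL}^3_G(L,L)$. If this class vanishes then, because the equivariant cohomology is computed from the equivariant cochain complex, there exist equivariant $2$-cochains $(m_{n+1},\alpha_{n+1})\in\widetilde{CL}^2_G(L,L)$ with $\text{Obs}_G^n=\partial(m_{n+1},\alpha_{n+1})$. I would then set $\bar{m}_t=m_t+m_{n+1}t^{n+1}$ and $\bar{\alpha}_t=\alpha_t+\alpha_{n+1}t^{n+1}$ and verify that this is an equivariant $(n+1)$-deformation: the deformation equations in orders $\leq n$ hold because $(m_t,\alpha_t)$ is already an $n$-deformation, the order $n+1$ equations are exactly the relation $\text{Obs}_G^n=\partial(m_{n+1},\alpha_{n+1})$ rearranged, and equivariance of all coefficients is inherited from the hypothesis together with the equivariance of $m_{n+1},\alpha_{n+1}$.

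The step I expect to be the main obstacle is guaranteeing that the cobounding pair $(m_{n+1},\alpha_{n+1})$ can be chosen equivariant. This is exactly where the construction of the equivariant complex pays off: vanishing of the class in $\widetilde{HL}^3_G(L,L)$, as opposed to vanishing merely in $\widetilde{HL}^3(L,L)$, already supplies a representative inside $\widetilde{CL}^2_G(L,L)$, so no averaging or projection over $G$ is needed. Everything else is the formal bookkeeping of the non-equivariant obstruction theorem, and for the coefficient-comparison computations I would simply invoke that earlier argument rather than repeat them.
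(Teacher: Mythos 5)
Your proposal is correct and follows exactly the route the paper intends: the paper gives no separate proof, stating only that the theorem is proved "along the same line as the non-equivariant case," and your argument is precisely that adaptation, with the one genuinely new point (that the cobounding pair can be taken in $\widetilde{CL}^2_G(L,L)$ because the class vanishes in the equivariant cohomology) correctly identified and handled.
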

\begin{cor}
If $\widetilde{HL}_G^3 (L, L)=0$ then any equivariant $2$-cocycle gives an equivariant one-parameter formal deformation of $(G,L,[.,.],\alpha)$.
\end{cor}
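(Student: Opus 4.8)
The plan is to build the full deformation order by order through an inductive extension argument, combining the preceding lemma and extension theorem with the vanishing hypothesis. I start from a given equivariant $2$-cocycle $(m_1,\alpha_1)\in\widetilde{CL}^2_G(L,L)$, so that $\partial(m_1,\alpha_1)=0$ and both $m_1,\alpha_1$ are equivariant. Setting $m_0=[.,.]$ and $\alpha_0=\alpha$ and forming the truncations $m_t=m_0+m_1t$, $\alpha_t=\alpha_0+\alpha_1t$, the order-$0$ instances of the deformation equations $(\ref{equ deform equ 11})$ and $(\ref{equ deform equ 12})$ are exactly the original Hom-Leibniz identity and the multiplicativity of $\alpha$, while the order-$1$ instances are precisely the statement that $\partial(m_1,\alpha_1)=0$, as recorded earlier. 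Hence $(m_0+m_1t,\alpha_0+\alpha_1t)$ is an equivariant $1$-deformation, which furnishes the base case of the induction.

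For the inductive step, I suppose an equivariant $n$-deformation $(m_t,\alpha_t)=\bigl(\sum_{i=0}^n m_it^i,\sum_{i=0}^n \alpha_it^i\bigr)$ has been constructed with the prescribed infinitesimal. By the preceding lemma its obstruction $\text{Obs}_G^n=(\text{Obs}^n_{G,\gamma},\text{Obs}^n_{G,\alpha})$ lies in $\widetilde{CL}^3_G(L,L)$ and is an equivariant $3$-cocycle. Since $\widetilde{HL}^3_G(L,L)=0$, every equivariant $3$-cocycle is an equivariant coboundary, so the cohomology class of $\text{Obs}_G^n$ vanishes. By the extension theorem this guarantees that the $n$-deformation extends: concretely one chooses an equivariant $2$-cochain $(m_{n+1},\alpha_{n+1})\in\widetilde{CL}^2_G(L,L)$ with $\partial(m_{n+1},\alpha_{n+1})=\text{Obs}_G^n$ and sets $\bar m_t=m_t+m_{n+1}t^{n+1}$, $\bar\alpha_t=\alpha_t+\alpha_{n+1}t^{n+1}$. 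Iterating over all $n$ produces the power series $m_t=\sum_{i\ge 0}m_it^i$ and $\alpha_t=\sum_{i\ge 0}\alpha_it^i$, an equivariant one-parameter formal deformation whose infinitesimal is the chosen cocycle.

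The step I expect to require the most care is the one that makes the argument genuinely equivariant rather than a formal transcription of the non-equivariant corollary: at each stage the primitive $(m_{n+1},\alpha_{n+1})$ of the obstruction must itself be equivariant, so that condition $(4)$ of an equivariant deformation is preserved along the induction. This is exactly where the hypothesis has to be read as the vanishing of the equivariant group $\widetilde{HL}^3_G(L,L)$ computed from the invariant subcomplex $\widetilde{CL}^\ast_G(L,L)$, and not of the ordinary $\widetilde{HL}^3(L,L)$: the obstruction is already an invariant cochain by the lemma, and only the vanishing of the equivariant cohomology supplies an \emph{invariant} $2$-cochain whose coboundary is $\text{Obs}_G^n$. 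Granting this, the remaining verifications --- that $\partial(m_{n+1},\alpha_{n+1})=\text{Obs}_G^n$ genuinely encodes the order-$(n+1)$ deformation and multiplicativity equations, and that equivariance is stable under the extension --- are routine and follow the non-equivariant case verbatim.
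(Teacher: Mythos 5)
Your proof is correct and is exactly the argument the paper intends (the paper leaves the corollary as an immediate consequence of the obstruction lemma and the extension theorem): induct on the order, note the $2$-cocycle condition gives the order-$1$ deformation as the base case, and use $\widetilde{HL}_G^3(L,L)=0$ to trivialize each equivariant obstruction class and extend. Your emphasis that the primitive $(m_{n+1},\alpha_{n+1})$ must be chosen in the \emph{invariant} subcomplex $\widetilde{CL}^2_G(L,L)$ — which is precisely what vanishing of the equivariant, rather than the ordinary, third cohomology guarantees — is the right point of care and matches the paper's setup.
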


Finally, we study rigidity conditions for equivariant deformations. Observe that an action of a finite group $G$ on Hom-Leibniz algebra $L$ induces an action on $L[[t]]$ by bilinearity.
\label{equ equivalent defn}\begin{defn}
Given two equivariant deformations $L^G_t=(G,L,m_t,\alpha_t)$ and ${L\rq}^G_t=(L,m\rq_t,\alpha\rq_t)$ of $(G, L,[.,.],\alpha)$, where $m_t=\sum_{i\geq 0} m_it^i,\, \alpha_t=\sum_{i\geq 0}\alpha_it^i$ and $m\rq_t=\sum_{i\geq 0} m\rq_it^i,\, \alpha\rq_t=\sum_{i\geq 0}\alpha\rq_it^i$.  We say $L^G_t$ and ${L\rq}_t^G$ are equivalent if there is a formal isomorphism $\Psi_t:L[[t]]\to L[[t]]$ of the following form:
$$\Psi_t(a)=\psi_0(a)+\psi_1(a)t+\psi_2(a)t^2+\cdots,$$
Such that 
\begin{enumerate}
\item $\psi_0=Id$ and for $i\geq 1$, $\psi_i:L\to L$ are equivariant $\mathbb{K}$-linear maps.
\item $\Psi_t\circ m_t\rq=m_t\circ (\Psi_t\otimes \Psi_t)\,\,\,\text{and}\,\,\, \alpha_t\circ \Psi_t=\Psi_t\circ \alpha\rq_t.$
\end{enumerate}
\end{defn}
\begin{rmk}
Suppose  $L^G_t$ and ${L\rq}_t^G$ are equivalent deformation. For every subgroup $H\leq G$, $H$-fixed point set $L^H$ is a Hom-Leibniz sub algebra. A formal equivariant isomorphism $\Psi_t$ induces formal isomophism $L^H[[t]]\to {L\rq}^H[[t]]$ for all subgroups $H$ of $G$.
\end{rmk}
From the second condition of the Definition (\ref{equ equivalent defn}) we have the following equations:
 \begin{align}
 &\sum_{i,j\geq 0}\psi_i(m\rq_j(x,y))t^{i+j}=\sum_{i,j,k\geq 0}m_i(\psi_j(x),\psi_k(y))t^{i+j+k},\\
 &\sum_{i,j \geq 0}\alpha_i(\psi_j(x))t^{i+j}=\sum_{i,j\geq 0}\psi_i(\alpha\rq_j(x))t^{i+j}.
 \end{align}
 Comparing coefficients of infinitesimals on both sides of the above equations, we have the following proposition.
 \begin{prop}
Equivariant infinitesimals of two equivalent equivariant deformations determine the same cohomology class.
  \end{prop}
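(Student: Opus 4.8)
The plan is to run the non-equivariant argument of the Proposition \emph{``Two equivalent deformations have cohomologous infinitesimals''} at the level of the degree-one term, and then to upgrade the resulting relation to the equivariant complex by observing that the twisting map $\psi_1$ produced is automatically $G$-equivariant. Concretely, I would start from the two power-series identities coming from the second condition of Definition (\ref{equ equivalent defn}) (the equivariant analogues of Equations (\ref{equivalent 10}) and (\ref{equivalent 101})) and extract the coefficient of $t^1$ on each side, using $\psi_0 = \mathrm{Id}$, $m_0 = m\rq_0 = [.,.]$ and $\alpha_0 = \alpha\rq_0 = \alpha$. Since the infinitesimal is the degree-one term, no intermediate $\psi_j$-cross-terms appear and the extraction is clean.

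From the multiplication identity the coefficient of $t$ reads $m\rq_1(x,y) + \psi_1([x,y]) = m_1(x,y) + [\psi_1(x),y] + [x,\psi_1(y)]$, which rearranges to $m\rq_1(x,y) - m_1(x,y) = [\psi_1(x),y] + [x,\psi_1(y)] - \psi_1([x,y]) = \partial_{\gamma \gamma}\psi_1(x,y)$. From the structure-map identity the coefficient of $t$ gives $\alpha\rq_1(x) - \alpha_1(x) = \alpha(\psi_1(x)) - \psi_1(\alpha(x)) = \partial_{\gamma \alpha}\psi_1(x)$, with the sign convention already fixed in Section \ref{sec 4}. Since $\widetilde{CL}_\alpha^1(L,L) = \lbrace 0 \rbrace$, the relevant $1$-cochain is the pair $(\psi_1, 0)$, and the definition of $\partial$ yields $\partial(\psi_1, 0) = (\partial_{\gamma \gamma}\psi_1,\, \partial_{\gamma \alpha}\psi_1)$. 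Hence the two infinitesimals satisfy $(m\rq_1, \alpha\rq_1) - (m_1, \alpha_1) = \partial(\psi_1, 0)$, i.e. they differ by a coboundary.

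The one genuinely new point, relative to Section \ref{sec 4}, is that this coboundary must be exhibited inside the equivariant subcomplex $\widetilde{CL}^\ast_{G}(L,L)$. This is where Definition (\ref{equ equivalent defn}) does the work: each $\psi_i$, in particular $\psi_1$, is an equivariant $\mathbb{K}$-linear map, so $(\psi_1, 0) \in \widetilde{CL}^1_{G}(L,L)$; the earlier Lemma stating that the four differentials $\partial_{\gamma \gamma}, \partial_{\gamma \alpha}, \partial_{\alpha \alpha}, \partial_{\alpha \gamma}$ respect the $G$-action then gives $\partial(\psi_1, 0) \in \widetilde{CL}^2_{G}(L,L)$. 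Combined with Proposition \ref{equ cocycle}, which guarantees that $(m_1, \alpha_1)$ and $(m\rq_1, \alpha\rq_1)$ are both equivariant $2$-cocycles, we conclude that the two equivariant infinitesimals represent the same class in $\widetilde{HL}^2_{G}(L,L)$.

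I expect the main obstacle to be purely bookkeeping rather than conceptual: keeping the $t$-coefficient extraction honest (discarding the cross terms that vanish because the lower-order $m_i,\alpha_i$ reduce to the original structure) and matching the sign convention of $\partial_{\gamma \alpha}$ exactly as fixed earlier, so that the difference $(m\rq_1, \alpha\rq_1) - (m_1, \alpha_1)$ is literally $\partial(\psi_1, 0)$ and not its negative. No new estimate or structural idea beyond the equivariance of $\psi_1$ and the $G$-invariance of the four differentials is required.
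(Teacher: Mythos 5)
Your proposal is correct and follows essentially the same route as the paper: the paper's own justification is simply to compare coefficients of $t$ in the two equivalence identities, exactly reproducing the non-equivariant computation that yields $(m'_1,\alpha'_1)-(m_1,\alpha_1)=\partial(\psi_1,0)$. The one point you make explicit that the paper leaves implicit --- that $\psi_1$ being equivariant places $(\psi_1,0)$ in $\widetilde{CL}^1_{G}(L,L)$ so the coboundary lives in the equivariant subcomplex --- is exactly the intended content, so your write-up is, if anything, more complete than the paper's.
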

  Similar to the non-equivariant case, we have the following rigidity theorem for equivariant deformations.
 \begin{theorem}
 Let $(G,L,[.,.],\alpha)$ be a Hom-Leibniz algebra equipped with an action of finite group $G$. If $\widetilde{HL}_G^2 (L, L)=0$ then $L$ is equivariantly rigid.
 \end{theorem}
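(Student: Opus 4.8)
The plan is to follow the template of the non-equivariant rigidity result (the Corollary following the theorem ``A non-trivial deformation of a Hom-Leibniz algebra is equivalent to a deformation whose infinitesimal is not a coboundary''), but to carry out every step inside the equivariant cochain complex $\widetilde{CL}^\ast_G(L,L)$ so that all the maps produced along the way remain $G$-equivariant. Concretely, I would establish the equivariant analogue of that theorem: a non-trivial equivariant deformation is equivalent, through an equivariant formal isomorphism, to one whose infinitesimal is not an equivariant coboundary. The hypothesis $\widetilde{HL}_G^2(L,L)=0$ then forbids the existence of such an infinitesimal, forcing every equivariant deformation to be equivariantly trivial, which is exactly equivariant rigidity.

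First I would take an arbitrary equivariant deformation $(m_t,\alpha_t)$ of $(G,L,[.,.],\alpha)$ and let $(m_n,\alpha_n)$, $n\geq 1$, be its first nonzero term after $(m_0,\alpha_0)$, that is, its equivariant $n$-infinitesimal. By Proposition \ref{equ cocycle} this pair is an equivariant $2$-cocycle, so $(m_n,\alpha_n)\in\widetilde{CL}^2_G(L,L)$ with $\partial(m_n,\alpha_n)=0$. Since $\widetilde{HL}_G^2(L,L)=0$, this equivariant cocycle is an equivariant coboundary: there is an equivariant $1$-cochain $\phi_n\in\widetilde{CL}^1_G(L,L)$ with $m_n=-\partial_{\gamma\gamma}\phi_n$ and $\alpha_n=-\partial_{\gamma\alpha}\phi_n$. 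The crucial point, and the reason the equivariant vanishing is the right hypothesis, is that the primitive $\phi_n$ is automatically $G$-equivariant because the vanishing takes place in the equivariant complex.

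Next I would define the formal isomorphism $\Psi_t(a)=a+\phi_n(a)t^n$ and set $\bar m_t=\Psi_t^{-1}\circ m_t\circ(\Psi_t\otimes\Psi_t)$ and $\bar\alpha_t=\Psi_t^{-1}\circ\alpha_t\circ\Psi_t$, exactly as in the non-equivariant proof. Because $\phi_n(gx)=g\phi_n(x)$, the map $\Psi_t$ commutes with the induced $G$-action on $L[[t]]$, so $(\bar m_t,\bar\alpha_t)$ is an equivariant deformation equivalent to $(m_t,\alpha_t)$ in the sense of Definition \ref{equ equivalent defn}. Comparing coefficients of $t^n$ yields $\bar m_n-m_n=\partial_{\gamma\gamma}\phi_n$ and $\bar\alpha_n-\alpha_n=\partial_{\gamma\alpha}\phi_n$, whence $\bar m_n=0$ and $\bar\alpha_n=0$, while the terms of order below $n$ are unchanged. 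Thus the equivalent equivariant deformation $(\bar m_t,\bar\alpha_t)$ has its first nonzero higher term in degree strictly greater than $n$.

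Finally I would iterate this construction to raise the order of the first nonvanishing infinitesimal without bound, composing the successive equivariant isomorphisms. The hard part is the bookkeeping at the end: one must check that the infinite composite of the $\Psi_t$'s converges in the $t$-adic topology to a single formal equivariant isomorphism carrying $(m_t,\alpha_t)$ to the trivial deformation $(m_0,\alpha_0)$, and that equivariance survives the passage to the limit. Since each factor is the identity modulo $t^n$ with $n$ strictly increasing, convergence is the standard $t$-adic argument, and equivariance is preserved because both a composite and a $t$-adic limit of equivariant maps are again equivariant. This shows $(m_t,\alpha_t)$ is equivariantly trivial, and as it was arbitrary, $L$ is equivariantly rigid.
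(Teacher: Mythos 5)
Your proposal is correct and is exactly the argument the paper intends: the paper gives no written proof here, saying only that the result follows ``similar to the non-equivariant case,'' and your write-up is precisely that non-equivariant killing-of-coboundary-infinitesimals argument transported into $\widetilde{CL}^\ast_G(L,L)$, with the key observation that the primitive $\phi_n$ can be chosen equivariant because the vanishing hypothesis is on the \emph{equivariant} cohomology. Your extra care about the $t$-adic convergence of the infinite composite of the $\Psi_t$'s and the preservation of equivariance in the limit fills in detail the paper leaves implicit.
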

 

\end{document}